\documentclass[10pt]{amsart}
\usepackage{euscript}
\usepackage{amssymb}
\usepackage{amsmath}
\usepackage{epic}
\usepackage{graphics}
\usepackage{epsfig}
\usepackage{color}

\usepackage{amscd,euscript}
\usepackage[frame,cmtip,curve,arrow,matrix,line,graph]{xy}

\numberwithin{equation}{section}
\setlength{\textwidth}{5.6in}
\setlength{\textheight}{8.2in}
\setlength{\oddsidemargin}{.2in}
\setlength{\evensidemargin}{.2in}
\setlength{\topmargin}{.1in}
\setlength{\headsep}{.3in}


\newtheoremstyle{my}{1.5em}{0.5em}{\em}{}{\sc}{.}{0.5em}{}

\newtheorem{thm}{Theorem}[section]
\newtheorem{Theorem}[thm]{Theorem}
\newtheorem*{Theorem*}{Theorem}
\newtheorem{Corollary}[thm]{Corollary}

\newtheorem*{corollary*}{Corollary}
\newtheorem{Lemma}[thm]{Lemma}

\newtheorem{Proposition}[thm]{Proposition}

\newtheorem*{conjecture*}{Conjecture}

\newtheorem*{question*}{Question}

\newtheorem*{definitions*}{Definitions}

\newtheorem*{rem*}{Remark}
\newtheorem{Remark}[thm]{Remark}

\newtheorem*{remark*}{Remark}

\newtheorem*{remarks*}{Remarks}
\newtheorem*{example*}{Example}

\newtheorem*{examples*}{Examples}

\newtheorem*{convention*}{Convention}
\newtheorem*{conventions*}{Conventions}
\newtheorem{Hypothesis}[thm]{Hypothesis}

\newtheorem*{exercise*}{Exercise}
\newtheorem*{bibliographical-note*}{Bibliographical note}

\newcommand{\Acknowledgements}{{\em Acknowledgements.} }


\newcommand{\scrQ}{\EuScript{Q}}
\newcommand{\scrA}{\EuScript{A}}
\newcommand{\scrT}{\EuScript{T}}

\newcommand{\scrX}{\EuScript{X}}

\newcommand{\scrC}{\EuScript{C}}

\newcommand{\scrP}{\EuScript{P}}

\newcommand{\scrW}{\EuScript{W}}

\newcommand{\bF}{\mathbb{F}}
\newcommand{\bR}{\mathbb{R}}
\newcommand{\bZ}{\mathbb{Z}}
\newcommand{\bQ}{\mathbb{Q}}
\newcommand{\bC}{\mathbb{C}}
\newcommand{\bN}{\mathbb{N}}
\newcommand{\bP}{\mathbb{P}}


\newcommand{\Sym}{\mathrm{Sym}}
\newcommand{\Pic}{\mathrm{Pic}}
\newcommand{\id}{\mathrm{id}}

\renewcommand{\ker}{\mathrm{ker}}

\newcommand{\Hom}{\mathrm{Hom}}

\newcommand{\Conf}{\mathrm{Conf}}
\newcommand{\Jac}{\mathrm{Jac}}

\renewcommand{\hbar}{\overline{\frak{h}}}

\newcommand{\Aut}{\mathrm{Aut}}
\newcommand{\Ham}{\mathrm{Ham}}
\newcommand{\Symp}{\mathrm{Symp}}
\newcommand{\Diff}{\mathrm{Diff}}


\newcommand{\scrM}{\EuScript{M}}

\newcommand{\scrF}{\EuScript{F}}

\newcommand{\scrU}{\EuScript{U}}

\newcommand{\diag}{\mathrm{diag}}

\numberwithin{equation}{section}
\setlength{\textwidth}{6.0in}
\setlength{\textheight}{8.2in}
\setlength{\oddsidemargin}{.2in}
\setlength{\evensidemargin}{.2in}
\setlength{\topmargin}{.1in}
\setlength{\headsep}{.3in}



\renewcommand{\geq}{\geqslant}

\newcommand{\tensor}{\otimes}

\newcommand{\lra}{\longrightarrow}



\newcommand{\C}{\mathbb C}

\renewcommand{\P}{\mathcal{P}}

\newcommand{\Z}{\mathbb{Z}}



\newcommand{\Tw}{\operatorname{Tw}}

\newcommand{\st}{\operatorname{st}}

\renewcommand{\sc}{\operatorname{sc}}

\newcommand{\Auteq}{\mathrm{Auteq}}

\newcommand{\Stab}{\operatorname{Stab}}


\title{Stability conditions in symplectic topology}
\author{Ivan Smith}
\address{Ivan Smith, Centre for Mathematical Sciences, University of Cambridge, Wilberforce Road, CB3 0WB, U.K.}
\email{is200@cam.ac.uk}
\date{November 2017. I.S. is partially funded by a Fellowship from EPSRC. Thanks to friends and colleagues at Cambridge for a decade and a half of moral support, intellectual stimulation and company lifting spirits.}

\begin{document}
\maketitle \thispagestyle{empty}

\begin{abstract} We discuss potential (largely speculative) applications of Bridgeland's theory of stability conditions to symplectic mapping class groups. \end{abstract}

\section{Introduction}

A symplectic manifold $(X,\omega)$ which is closed or convex at infinity  has a Fukaya category $\scrF(X,\omega)$, which packages the algebraic information held by moduli spaces of holomorphic discs in $X$ with Lagrangian boundary conditions \cite{Seidel:FCPLT, FO3}.  The associated category of perfect modules $D^{\pi}\scrF(X,\omega) = \scrF(X,\omega)^{per\!f}$ is a triangulated category, linear over the Novikov field $\Lambda$. The category is $\bZ$-graded whenever $2c_1(X)=0$.

Any $\bZ$-graded triangulated category $\scrC$  has an associated complex manifold $\Stab(\scrC)$ of stability conditions \cite{Bridgeland}, which carries an action of the group $\Auteq(\scrC)$ of triangulated autoequivalences of $\scrC$.  Computation of the space of stability conditions remains challenging, but a number of instructive examples are now available, and it is reasonable to wonder what the theory of stability conditions might say about symplectic topology. One direction in which one can hope for non-trivial applications concerns global structural features of the symplectic mapping class group.   The existing theory will need substantial  development for this to bear  fruit, so a survey risks being quixotic, but it provides a vantage-point from which to interpret some recent activity \cite{BS, Smith:quiver, SS:k3}.

\vspace{1em}

\noindent \Acknowledgements These notes draw on discussions or joint work with all of Denis Auroux, Arend Bayer, Tom Bridgeland, Jonny Evans, Daniel Huybrechts, Oscar Randal-Williams, Tony Scholl, Paul Seidel, Nick Sheridan and Michael Wemyss.   More generally, I am extremely indebted to all my collaborators, who have taught me more than I (and probably they) would have thought possible.

\section{Mapping class groups}

\subsection{Surfaces\label{Subsec:surfaces}} Consider a closed oriented surface $\Sigma_g$ of genus $g$. The mapping class group $\Gamma_g = \pi_0\Diff(\Sigma_g)$ has been the focus of an enormous amount of attention, from many viewpoints.  Here are a few of its salient features, see \cite{Farb-Margalit, Calegari:scl, Morita} for detailed references.  

\begin{enumerate}

\item $\Gamma_g$ is finitely presented.  The action on homology defines a natural surjection $\Gamma_g\to Sp_{2g}(\bZ)$ with kernel the Torelli group $I_g$; the latter is torsion-free, infinitely generated when $g=2$, finitely generated (but not known to be finitely presented) when $g>2$.

\item $\Gamma_g$ has finite rational cohomological dimension. It has finitely many conjugacy classes of finite subgroups; is generated by finitely many torsion elements; satisfies the Tits alternative; and is residually finite, in particular contains no non-trivial divisible element. 

\item There is a dynamical classification of mapping classes: given simple closed curves $\alpha, \beta$ on $\Sigma$, the geometric intersection numbers $\iota(\alpha, \phi^n(\beta))$  are either periodic, grow linearly or grow exponentially.   A random walk on the mapping class group will almost surely end on the last case. $\Gamma_g$ has non-trivial quasi-morphisms (and $scl$), hence does not obey a law.

\end{enumerate}
Here $scl$ denotes stable commutator length. Recall that a group $G$ \emph{obeys a law} if there is a word $w$ in a free group $\bF$ for which every homomorphism $\bF \to G$ sends $w\mapsto \id$. (Abelian, nilpotent and solvable groups obey laws; groups with non-vanishing $scl$ do not.)  It is worth pointing out that plenty of elementary things remain unknown: must a subgroup of $I_g$ generated by two elements be commutative or free?  

Broadly speaking, the above results might be divided into three categories: results (finite presentability, non-existence of non-trivial divisible elements, etc) which underscore the similarities between the mapping class group and arithmetic groups; results of a general group-theoretic or finiteness nature, in the vein of geometric group theory;  and results ($scl$, behaviour of random walks) of a more dynamical flavour, in some cases connecting via geometric intersection numbers rather directly to Floer theory.  These three directions suggest broad classes of question one might ask about the (symplectic) mapping class groups of higher-dimensional manifolds.

Much of what we know about $\Gamma_g$ comes from the fact that, although not a hyperbolic group when $g>1$ (it contains free abelian subgroups of rank $2g-1>1$),  it acts on geometrically meaningful spaces which are non-positively-curved in a suitable sense:  Teichm\"uller space, on the one hand, and the complex of curves on another (the latter is $\delta$-hyperbolic \cite{Masur-Minsky}).   Indeed, $\Gamma_g$ can be characterised via these actions, as the isometry group of the former for the Teichm\"uller metric (when $g>2$; for $g=2$ one should quotient by the hyperelliptic involution) \cite{Royden}, or as the simplicial isometry group of the latter \cite{Ivanov}.

\subsection{Higher-dimensional smooth manifolds}

There is no simply-connected manifold of dimension $>3$ for which the homotopy type of the diffeomorphism group $\Diff(M)$ is known completely. Nonetheless, there are many broad structural results concerning diffeomorphism groups and mapping class groups in high dimensions. 
Deep results in surgery theory \cite{Sullivan} imply that if $M$ is a simply-connected manifold of dimension at least five, then $\pi_0\Diff(M)$ is commensurable with an arithmetic group, hence is finitely presented and contains no non-trivial divisible elements.  The arithmetic group arises from automorphisms of the Sullivan minimal model; the forgetful map from $\pi_0\Diff(M)$ to the group of tangential self-homotopy equivalences of $M$ has finite kernel up to conjugacy \cite{Browder}. Arithmetic groups cannot contain non-trivial divisible elements. Indeed, their eigenvalues, in a fixed matrix representation, are algebraic integers of bounded degree. Therefore a divisible subgroup is composed of unipotent elements, and has Zariski closure a unipotent subgroup; but commutative unipotent groups only have free abelian arithmetic subgroups \cite{KMM}. 
(Note that general finitely presented groups \emph{can} contain non-trivial divisible elements: indeed, every finitely presented group is a subgroup of some finitely presented group $G$ whose commutator subgroup $[G,G]$ has only divisible elements and with $G/[G,G]\cong\bZ$, cf. \cite{Baumslag-MillerIII}.)  The same conclusions, in particular finite presentability, also apply to the Torelli group.  

Away from the simply connected case, much less is known.  For $n\geq 6$, the mapping class group $\pi_0\Diff(T^n)$ has a split subgroup $(\bZ/2)^{\infty}$ arising from the Whitehead group $\mathrm{Wh}_2$ of the fundamental group \cite{Hatcher:concordance, Hsiang-Sharpe}, so finite generation can fail. (This is in fact a phenomenon about homeomorphisms, rather distinct from questions of exotic smooth structures.)  The group of homotopy self-equivalences $hAut(M)$ when $\pi_1(M)$ is general also seems mysterious: can $hAut(M)$, or $\pi_0\Diff(M)$, ever contain a divisible group, for instance the rationals, as a subgroup?  Given the lack of complete computations, nothing seems to be known about the realisability problem, i.e. which groups occur as smooth mapping class groups: is there an $M$ for which $\pi_0\Diff(M)$ is a rank 2 free group?

\subsection{Symplectic manifolds}

Fix a symplectic $2n$-manifold $(X,\omega)$, closed or convex at infinity.  Let $\Symp(X,\omega)$ denote the group of diffeomorphisms of $X$ preserving $\omega$, equipped with the $C^{\infty}$-topology. 
\begin{enumerate}
\item If $H^1(X;\bR)=0$ then the connected component of the identity $\Symp_0(X) = \Symp(X,\omega) \cap \Diff_0(X)$  is exactly the subgroup $\Ham(X)$ of Hamiltonian symplectomorphisms, and the symplectic mapping class group is the quotient $\pi_0\Symp(X) = \Symp(X,\omega)/\Ham(X)$.
\item If $H^1(X;\bR)\neq 0$ then there is a flux homomorphism (defined on the universal cover)
\[
\Phi: \widetilde{\Symp}(X,\omega) \rightarrow H^1(X;\bR)
\]
whose kernel is the universal cover of the Hamiltonian group. The image $\Gamma$ of $\Phi$ viewed as a homomorphism on $\pi_1\Symp(X,\omega)$  is a discrete group \cite{Ono:flux}, and $\Symp_0(X)/\Ham(X) \cong H^1(X;\bR)/\Gamma$. $\Symp_0(X,\omega)$ carries a foliation by copies of $\Ham(X)$, whose leaf space is locally isomorphic to $H^1(X;\bR)$. Since Floer theory is Hamiltonian invariant,  it is sometimes useful to equip $\Symp(X,\omega)$ with the weaker ``Hamiltonian topology", in which (by definition) only isotopies along the leaves are continuous. 
\end{enumerate}
There is a forgetful map 
\[
q: \pi_0\Symp(X,\omega) \to \pi_0\Diff(X)
\]
which is typically neither injective nor surjective.  For non-injectivity, one has Seidel's results on squared Dehn twists and their inheritors \cite{Seidel:knotted, Tonkonog}: if $L\subset X$ is a Lagrangian sphere, there is an associated twist $\tau_L$, which has finite order smoothly if $n=\dim_{\bR}(L)$ is even, but typically has infinite order symplectically, for instance for smooth hypersurfaces of degree $>2$ in projective space.  If $n=2$, the smooth finiteness can be seen rather explicitly.  The cotangent bundle $T^*S^2 =\{x^2+y^2+z^2=1\}\subset \bC^3$ is an affine quadric; the Dehn twist $\tau_L$ in the zero-section $L$ is the monodromy of the Lefschetz degeneration $\{x^2+y^2+z^2=t\}$ of that quadric around the unit circle in the $t$-plane; that family base-changes to give a family with a 3-fold node  $\{x^2+y^2+z^2=t^2\}$; and the node admits a small resolution, so the base-change family has trivial smooth monodromy and $\tau_L^2=\id$ in (compactly supported) $C^{\infty}$.   

For non-surjectivity, a diffeomorphism cannot be isotopic to a symplectic diffeomorphism unless it preserves $[\omega]$ and $c_i(X) \in H^{2i}(X;\bZ)$, and furthermore permutes the classes with non-trivial Gromov-Witten invariants \cite{Ruan:extremal}. There are also diffeomorphisms acting trivially on cohomology which do not preserve the homotopy class of any almost complex structure \cite{ORW}, so $q$ is not onto the Torelli subgroup for many projective hypersurfaces.

There should be further, less homotopy-theoretic constraints on Torelli symplectomorphisms. A diffeomorphism $\phi: X\to X$ induces an automorphism of the $A_{\infty}$-algebra $H^*(X;\bC)$ (where the $A_{\infty}$-structure is  the classical one on cohomology) which may have non-trivial higher-order terms even when the linear action is trivial. If $X$ is Fano, its quantum cohomology
\begin{equation} \label{eqn:splits}
QH^*(X;\bC) \cong \oplus_{\lambda} QH^*(X;\lambda) \qquad \lambda \in \mathrm{Spec}(\ast c_1(X))
\end{equation}
splits into generalised eigenspaces for quantum product by $c_1(X)$, and the higher-order terms of any symplectomorphism should preserve this decomposition, i.e. be compatible with the quantum-corrected $A_{\infty}$-structure, a non-linear constraint.


\subsection{Monodromy}
If $X$ admits a K\"ahler metric $g$ with positive-dimensional isometry group $K \subset \mathrm{Isom}(X,g)$, then the map $K \to \Symp(X,\omega)$  is often rather interesting, at least for higher homotopy groups. Gromov  \cite{Gromov} proved that $\bP U(3) \simeq \Symp(\bP^2)$ and inferred that the group of compactly supported symplectomorphisms $\Symp_{ct}(\bC^2)$ is contractible;  for $n>2$ we don't know anything about $\pi_0\Symp_{ct}(\bC^n, \omega_{\st})$. It follows that fully computing $\pi_0\Symp(X,\omega)$ will not be feasible in essentially any higher-dimensional case, and replacing it with an algebraic avatar makes sense.

The most obvious source of information regarding $\pi_0\Symp(X,\omega)$ comes from algebraic geometry: if $X$ is a smooth projective variety and varies in a moduli space $\scrM$ (of complex structures, or complete intersections, with fixed polarisation), parallel transport yields a map 
\begin{equation} \label{eqn:monodromy}
\rho: \pi_1(\scrM) \to \pi_0\Symp(X,\omega);
\end{equation}
most known constructions of non-trivial symplectic mapping classes arise this way.  
Even in simple cases, classical topology struggles to say much about \eqref{eqn:monodromy}.  Let $p$ be a degree $m$ polynomial with distinct roots.  Let $X$ be the Milnor fibre $\{x^2+y^2+p(z)=0\}\subset \bC^3$ of the $A_{m-1}$-singularity $\bC^2/(\bZ/m)$. Parallel transport for the family over configuration space varying $p$, and simultaneous resolution of 3-fold nodes, now yields a diagram
\begin{equation} \label{eqn:diagram}
\xymatrix{
Br_m \ar[rr]^{\rho} \ar[d] && \pi_0\Symp_{ct}(X) \ar[d]\\
\Sym_m \ar[rr]^{\mu} && \pi_0\Diff_{ct}(X) 
}
\end{equation}
but  $\rho$ is actually injective \cite{Khovanov-Seidel}. For Milnor fibres of most singularities, the kernel 
\[
\pi_0\Symp_{ct}(X,\omega) \to \pi_0\Diff_{ct}(X)
\]
 is large  \cite{Keating:free_twists}.  In another direction,  let $\scrM_{d,n} = Z_{d,n}/\bP \operatorname{GL}_{n+2}(\bC)$ denote the moduli space of degree $d$ hypersurfaces in $\bP^{n+1}$, with $Z_{d,n} \subset H^0(\bP^{n+1},\mathcal{O}(d))$ the discriminant complement.  Tommasi \cite{Tommasi} proved that $\tilde{H}^*(\scrM_{d,n};\bQ) = 0$ for $d\geq 3$ and $\ast < (d+1)/2$, so the natural map $\scrM_{d,n} \to B\Diff(X_{d,n})$ cannot be probed using the rather rich cohomology of the image \cite{GRW}.   It seems unclear when \eqref{eqn:monodromy}
induces an interesting map on rational cohomology.

 Whilst (the possible failure of) \emph{injectivity} of $\rho$ can be probed using Floer-theoretic methods, these seem much less well-adapted to understanding the  possible \emph{surjectivity} of $\rho$; for questions of finite generation, or residual finiteness, one needs maps out of $\pi_0\Symp(X,\omega)$, i.e. one needs to have it act on something.  Whilst one can formally write down analogues of the complex of curves, the lack of classification results for Lagrangian submanifolds makes it hard to extract useful information.  Spaces of stability conditions on the Fukaya category emerge as another contender, simply because they have been effective in some situations on the other side of the mirror.
 
 Obviously, since $\pi_0\Symp(X,\omega)$ acts on spaces associated to the Fukaya category through the action of the quotient $\Auteq(\scrF(X,\omega))/\langle [2]\rangle$ (dividing by twice the shift functor), one can only hope to extract information about the latter; this is somewhat similar to first steps in surgery theory, in which one obtains information about $\pi_0\Diff(M)$ from the quotient group $hAut(M)$ of homotopy self-equivalences on the one-hand, as probed by surgery theory and $L$-theory,  and separately (and with separate techniques) from pseudo-isotopy theory and algebraic $K$-theory.  In classical surgery theory,  $B\Diff(M)$ is more accessible to study  (e.g. cohomologically)  than $\pi_0\Diff(M)$, and working at the space level is crucial for many applications. One can upgrade autoequivalences of an $A_{\infty}$-category to a simplicial space, but there are no compelling computations, and we will not pursue that direction.  We should at least mention the Seidel representation \cite{Seidel:QH_invertible} $\pi_1\Ham(X,\omega) \to QH^*(X)^{\times}$ (to the group of invertibles)
  as indication that the higher homotopy groups may carry interesting information away from the Calabi-Yau setting; when $c_1(X)=0$ the situation is less clear-cut.
 
We will say that a symplectic manifold $(X,\omega)$ is \emph{homologically mirror} to an algebraic variety $X^{\circ}$ over $\Lambda$ if there is a $\Lambda$-linear equivalence $D^{\pi}\scrF(X,\omega) \simeq D^b(X^{\circ})$.  The following result,  whilst  narrow in scope, gives a first indication that  knowing that something is a homological mirror might sometimes be leveraged to extract new information. 
 
 \begin{Proposition} Let $(X,\omega)$ be a $K3$ surface which is homologically mirror to an algebraic  K3 surface $X^{\circ}$. Then a symplectomorphism $f$ of $X$ which preserves the Lagrangian isotopy class of each  Lagrangian sphere in $X$ acts on $D^{\pi}\scrF(X)$ with finite order. If $f$ acts on $D^{\pi}\scrF(X)$ non-trivially then it acts on $H^*(X;\bC)$ non-trivially.
 \end{Proposition}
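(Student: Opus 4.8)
The plan is to transport the problem across the mirror equivalence $D^\pi\scrF(X,\omega) \simeq D^b(X^\circ)$ and use the description of the autoequivalence group of a K3 surface, together with the Torelli-type control on its action on cohomology. First I would recall that for an algebraic K3 surface $X^\circ$, the group $\Auteq(D^b(X^\circ))$ acts on the total cohomology $H^*(X^\circ;\bZ)$ preserving the Mukai pairing and the (orientation of the) positive-definite $4$-plane; the kernel of this action is generated by shifts, the double shearing $T_{\OO_{X^\circ}}^2$ by the structure sheaf spherical twist (up to shift it acts trivially on cohomology but not on $D^b$), and — conjecturally in general but known in the cases of interest — by such "trivially-acting" elements that nonetheless have infinite order only through these spherical-twist contributions. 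The crucial point is the theorem of Bayer–Bridgeland (building on Bridgeland's conjecture on $\Stab(X^\circ)$ for K3s, proven for Picard rank one and in other cases) that the subgroup of $\Auteq$ acting trivially on cohomology is, up to shift, generated by the squared twists along spherical objects, i.e. it is controlled by the fundamental group of the (conjecturally connected and simply connected) space of stability conditions.

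The second step is to identify spherical objects on the mirror with Lagrangian spheres in $X$. Under a homological mirror equivalence, Lagrangian spheres $L\subset X$ give rise to spherical objects of $\scrF(X,\omega)$ (their Floer cohomology is that of $S^n$), hence to spherical objects of $D^b(X^\circ)$, and the Dehn twist $\tau_L$ goes to the spherical twist $T_{E_L}$ along the corresponding object $E_L$ (Seidel's long exact sequence matching Seidel–Thomas twists). Now suppose $f\in\Symp(X,\omega)$ preserves the Lagrangian isotopy class of every Lagrangian sphere. Then the induced autoequivalence $\Phi_f\in\Auteq(D^\pi\scrF(X))$ conjugates each spherical twist $T_{E_L}$ to a twist along an object in the same isotopy class, hence (since the twist depends only on the quasi-isomorphism class of $E_L$, and Lagrangian-isotopic spheres are quasi-isomorphic in the Fukaya category) commutes with $T_{E_L}$ for every Lagrangian sphere $L$. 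One then argues that the squared twists $T_{E_L}^2$ generate (up to shift) the full "Torelli" subgroup of $\Auteq(D^b(X^\circ))$ — this is where one invokes that on a K3 the Fukaya category has enough Lagrangian spheres, equivalently that the mirror has enough spherical objects to generate the relevant braid-type subgroup. Consequently $\Phi_f$ centralises this whole subgroup; since that subgroup has finite-index normaliser behaviour and the cohomology action of $\Phi_f$ is then forced to be trivial (it commutes with a large reflection group acting irreducibly on a hyperbolic lattice), $\Phi_f$ itself lies in the Torelli subgroup.

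The third step upgrades "lies in the Torelli subgroup" to "has finite order", and simultaneously yields the contrapositive in the last sentence. If $\Phi_f$ acts trivially on $H^*(X;\bC)$ then, by the Bayer–Bridgeland description, $\Phi_f$ is (up to shift, which is trivial in $\Auteq/\langle[2]\rangle$) a product of squared spherical twists; but we have just shown $\Phi_f$ commutes with all of these, and a non-trivial element of a free-product-like braid group (the deck group of $\Stab$) cannot be central — so $\Phi_f$ is trivial in $\Auteq(D^\pi\scrF(X))/\langle[2]\rangle$, whence $f$ acts with finite order (in fact trivially modulo shift) on $D^\pi\scrF(X)$. Contrapositively, if $f$ acts non-trivially on $D^\pi\scrF(X)$ it cannot act trivially on $H^*(X;\bC)$. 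For the first sentence (finite order without the cohomological hypothesis), one observes that by the above $\Phi_f$ maps to a finite-order element of $O(H^*(X;\bZ))$ — the cohomology action of any symplectomorphism of a K3 preserving the needed structure lies in a group where elements preserving the isotopy classes of all spheres form a finite group (the reflections in $(-2)$-classes generate a finite-index subgroup) — and the kernel of the cohomology action, intersected with our centraliser, is trivial modulo shift by the free-group argument; hence $\Phi_f$ has finite order.

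The main obstacle, and the reason the statement is phrased as acting on $D^\pi\scrF(X)$ rather than on a space of stability conditions, is the input from the theory of $\Stab$: one needs that the "trivially acting" part of $\Auteq(D^b(X^\circ))$ is exactly the fundamental group of a connected, simply-connected component of $\Stab(X^\circ)$ and is generated by squared spherical twists, and that the squared Dehn twists in Lagrangian spheres account for all of these on the symplectic side. This is known for K3s of Picard rank one and in further cases but is Bridgeland's conjecture in general; so the honest proof will either restrict to the settled cases or take the relevant connectedness/simple-connectedness of $\Stab$ as a hypothesis. The remaining steps — matching Dehn twists with spherical twists under HMS, and the elementary group theory showing a central element of the twist subgroup is trivial and that commuting with a large reflection group forces trivial cohomology action — are comparatively routine.
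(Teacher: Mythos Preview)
Your route is genuinely different from the paper's, and considerably heavier. The paper does \emph{not} go through Bridgeland's conjecture or the Bayer--Bridgeland description of the Torelli subgroup at all. Instead it uses a single structural result of Huybrechts: an autoequivalence of $D^b(X^\circ)$ that fixes every spherical object up to isomorphism is induced by a \emph{transcendental automorphism} of $X^\circ$. The group of transcendental automorphisms of an algebraic K3 is finite cyclic and acts faithfully on $H^*(X^\circ;\bC)$, which immediately gives both conclusions. The only remaining input---shared with your argument---is that every spherical object in $D^\pi\scrF(X)$ is quasi-isomorphic to a Lagrangian sphere, so ``preserves Lagrangian isotopy classes of spheres'' really does mean ``fixes all spherical objects''. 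This holds by Sheridan--Smith and does not require Picard rank one.

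The practical difference is scope and robustness. Your centraliser argument needs (i) Bridgeland's conjecture that the cohomologically trivial autoequivalences are exactly $\pi_1$ of the period domain complement, generated by squared spherical twists, and (ii) that this deck group has trivial centre. Both are known only when $\rho(X^\circ)=1$, so your proof as written is conditional outside that case, whereas the paper's works for any algebraic K3 mirror. There is also a wobble in your step three: you conclude $\Phi_f$ is \emph{trivial} modulo shift, but the proposition only asserts \emph{finite order}, and the remark immediately following the proof (the covering involution on a sextic double plane) indicates nontrivial finite-order examples are expected. The resolution is that such $\Phi_f$ need not be cohomologically trivial on the full $H^*(X^\circ)$---a transcendental automorphism acts trivially on the numerical lattice $\bN(X^\circ)$ but nontrivially on the transcendental lattice---so it lies outside the deck group you are taking the centre of, and your ``central element of a free group'' argument does not apply to it. The paper's approach sidesteps this entirely: once you know $\Phi_f$ is pulled back from a transcendental automorphism, finiteness and faithfulness on cohomology are classical facts about K3 automorphisms, with no group-theoretic juggling required.
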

 
\begin{proof} An autoequivalence of $D^b(X^\circ)$ acting trivially on the set of spherical objects in $D^b(X^{\circ})$ arises from a transcendental automorphism of $X^{\circ}$, cf. \cite[Appendix A]{Huybrechts:spherical}.  The group of such is a finite cyclic group \cite[Ch.3 Corollary 3.4]{Huybrechts:K3book} acting faithfully on cohomology.  Finally, all spherical objects in $D^{\pi}\scrF(X)$ are quasi-isomorphic to Lagrangian sphere vanishing cycles \cite{SS:k3} (the proof given in \emph{op. cit.}, relying on constraining lattice self-embeddings via discriminant considerations, can be generalised away from the case when $X^{\circ}$ has Picard rank one).
\end{proof}
One could view this as a weak version of the fact that $\Gamma_g$ acts faithfully on the complex of curves.  Note that for a double plane $X \to \bP^2$ branched over a sextic one expects that every Lagrangian sphere arises from a degeneration of the branch locus, and the covering involution reverses orientation but preserves its (unoriented) Lagrangian isotopy class, suggesting the conclusion may be optimal. 
    
\section{Stability conditions}  

\subsection{Definitions\label{Subsec:Definitions}} Let $\scrC$ be a proper, i.e. cohomologically finite, triangulated category, linear over a field $k$.  We will assume that the numerical Grothendieck group $K(\scrC)$, i.e. the quotient of the Grothendieck group $K^0(\scrC)$ by the kernel of the Euler form,  is free and of finite rank $d$. The space of (locally finite numerical) stability conditions will then be a $d$-dimensional complex manifold.

 A \emph{stability condition} $\sigma=(Z,\P)$ on   $\scrC$
consists of
a group homomorphism
$Z\colon K(\scrC)\to\C$ called the \emph{central charge},
and full additive
subcategories $\P(\phi)\subset\scrC$ of \emph{$\sigma$-semistable objects of phase $\phi$}  for each $\phi\in\bR$, 
which together satisfy a collection of axioms, the most important being:
\begin{itemize}
\item[(a)] if $E\in \P(\phi)$ then $Z(E)\in \bR_{>0}\cdot e^{i\pi\phi}\subset \C$;
\item[(b)] for each nonzero object $E\in\scrC$ there is a finite sequence of real
numbers $\phi_1>\phi_2> \dots >\phi_k$ 
and a collection of triangles
\[
\xymatrix@C=.5em{
0_{\ } \ar@{=}[r] & E_0 \ar[rrrr] &&&& E_1 \ar[rrrr] \ar[dll] &&&& E_2
\ar[rr] \ar[dll] && \ldots \ar[rr] && E_{k-1}
\ar[rrrr] &&&& E_k \ar[dll] \ar@{=}[r] &  E_{\ } \\
&&& A_1 \ar@{-->}[ull] &&&& A_2 \ar@{-->}[ull] &&&&&&&& A_k \ar@{-->}[ull] 
}
\]
with $A_j\in\P(\phi_j)$ for all $j$.
\end{itemize}
We will always furthermore impose the ``support property": for a norm $\|\cdot\|$ on $K(\scrC)\tensor\bR$ there is a constant $C>0$ such that $\|\gamma\|< C\cdot |Z(\gamma)|$ 
for all $\gamma\in K(\scrC)$  represented by $\sigma$-semistable objects in $\scrC$.
(This in particular means all our stability conditions are ``full", cf. \cite[Proposition B.4]{Bayer-Macri-localP2}.) The semistable objects $A_j$ appearing in the filtration of axiom (b) are unique up to isomorphism, and are called the \emph{semistable factors} of $E$. We set
\[\phi^+(E)=\phi_1, \quad \phi^-(E)=\phi_k, \quad m(E)=\sum_{i} |Z(A_i)|\in \bR_{>0}.\]
A simple object $E$ of $\P(\phi)$ is said to be stable of phase $\phi$ and mass $|Z(E)|$.  
Let  $\Stab(\scrC)$ denote the set of all stability conditions on $\scrC$.
It carries a natural topology, induced by the metric
 \begin{equation}
 \label{dist}
d(\sigma_1,\sigma_2) = \sup_{0\neq E\in\scrC}\left\{
  |\phi_{\sigma_2}^- (E) - \phi_{\sigma_1}^-(E)|, \  |\phi_{\sigma_2}^+
  (E) - \phi_{\sigma_1}^+(E)|, \ \left|\log
    \frac{m_{\sigma_2}(E)}{m_{\sigma_1}(E)}\right|  
\right\}\in[0,\infty].
\end{equation}

\begin{thm}[Bridgeland, cf. \cite{Bridgeland}]
\label{basic}
The space $\Stab(\scrC)$ has the structure of a complex manifold, such that the forgetful map
$\pi\colon \Stab(\scrC)\lra \Hom_{\Z}(K(\scrC),\C)$
taking a stability condition to its central charge is a local isomorphism.
\end{thm}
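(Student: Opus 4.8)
The plan is to follow Bridgeland's deformation argument, showing that $\Stab(\scrC)$ is locally homeomorphic to an open subset of $\Hom_{\Z}(K(\scrC),\C) \cong \bC^d$ via the central charge map $\pi$, and then transporting the complex structure. Fix a stability condition $\sigma_0 = (Z_0, \P_0)$. The first step is to establish that $\pi$ is locally injective: if two stability conditions $\sigma_1, \sigma_2$ are sufficiently close in the metric \eqref{dist} and have the same central charge, then $\phi^{\pm}_{\sigma_1}(E) = \phi^{\pm}_{\sigma_2}(E)$ for all $E$ (since the phases are pinned down mod $2$ by $Z(E)$ once they are within distance $<1$), which forces $\P_1 = \P_2$. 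So $\pi$ restricted to a small metric ball is injective.

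**The second step is the surjectivity/openness part**, which is the technical heart. Given a central charge $W \in \Hom_{\Z}(K(\scrC),\C)$ close to $Z_0$, one must construct a stability condition $\sigma_W = (W, \P_W)$ near $\sigma_0$. The idea is that for $W$ close enough to $Z_0$ (close relative to the support-property constant $C$), every $\sigma_0$-semistable object $E$ of phase $\phi$ has $W(E)$ lying in a small angular sector around $e^{i\pi\phi}$; one then re-assembles the Harder–Narasimhan filtrations. Concretely, one defines the would-be semistable objects of $\sigma_W$ and checks that the old HN filtrations can be refined/reshuffled into new ones compatible with $W$. The key quantitative input is precisely the support property: $\|\gamma\| < C|Z_0(\gamma)|$ on semistable classes ensures that if $\|W - Z_0\|$ is small then $W$ and $Z_0$ differ by a controlled amount on all semistable classes simultaneously, which is what lets the finitely-many-step HN filtrations deform without the phases "spreading out" uncontrollably. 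This is where I expect the main obstacle to lie — making precise that the perturbed semistable objects still satisfy axioms (a), (b), and the support property, i.e. that one genuinely lands back in $\Stab(\scrC)$ and not merely in some weaker structure. One checks that the resulting $\sigma_W$ is again locally finite and satisfies the support property (with a slightly worse constant), and that $d(\sigma_W, \sigma_0) \to 0$ as $W \to Z_0$, so the map $W \mapsto \sigma_W$ is a continuous local inverse to $\pi$.

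**The third step** is to assemble these local charts into a complex manifold. Having shown $\pi$ is a local homeomorphism onto an open subset of the complex vector space $\Hom_{\Z}(K(\scrC),\C)$, one pulls back the standard complex structure; the transition maps between overlapping charts are the identity on central charges (since $\pi$ is injective on each chart and the charts agree where they overlap by Step 1), hence holomorphic, so the complex structure is well-defined and $\pi$ is by construction a local biholomorphism. Finally one notes that the topology induced by this manifold structure agrees with the metric topology from \eqref{dist}: this follows because on each chart the metric $d$ is comparable to the Euclidean distance on central charges — one direction is the continuity of $W \mapsto \sigma_W$ just established, and the other is the elementary estimate that $\|W_1 - W_2\|$ is controlled by $d(\sigma_1,\sigma_2)$ via the masses $m(E)$ and phases of semistable objects. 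The support property enters once more here to make the comparison uniform. I would remark that imposing the support property (equivalently, fullness) is exactly what makes $\pi$ a local isomorphism onto an \emph{open} set rather than merely a locally injective immersion, which is the refinement over Bridgeland's original statement.
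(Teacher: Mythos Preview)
The paper does not give its own proof of this theorem: it is stated with attribution to Bridgeland \cite{Bridgeland} and then used as a black box. So there is no in-paper argument to compare your proposal against.

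That said, your sketch is a faithful outline of Bridgeland's original proof. The three-step structure (local injectivity via the phase argument, deformation of the slicing under a small perturbation of $Z$ using the support property to control phases uniformly, then pullback of the complex structure) is exactly how the result is established in \cite{Bridgeland} and its refinements. Your identification of the support property as what upgrades local injectivity to a local homeomorphism onto an \emph{open} set is also correct. One small caveat: in your Step~1 you appeal to phases being ``pinned down mod $2$'' to force $\P_1=\P_2$; the actual argument in \cite{Bridgeland} (Lemma~6.4 there, also invoked later in the present paper) is slightly more careful, showing that $d(\sigma_1,\sigma_2)<1$ together with $Z_1=Z_2$ forces equality of the slicings. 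But this is a matter of detail rather than a gap.
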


The group of triangulated autoequivalences $\Auteq(\scrC)$ acts on $\Stab(\scrC)$ by
holomorphic automorphisms which preserve the metric. 
There is a commuting (continuous, non-holomorphic) action of  the universal cover of the group  $\operatorname{GL^+}(2,\bR)$, not changing the subcategories $\P$, but acting by post-composition on the central charge viewed as a map to $\C=\bR^2$ (and correspondingly adjusting the $\phi$-labelling of $\P$). A  subgroup $\C \subset \widetilde{\operatorname{GL}}^+(2,\bR)$ acts freely, by
\[\bC \ni t\colon (Z,P)\mapsto (Z',\P'), \quad Z'(E)=e^{-i\pi t}\cdot Z(E), \quad \P'(\phi)=\P(\phi+\operatorname{Re}(t)).\]
For any  integer $n$, the action of the shift $[n]$ coincides with the action of $n\in \C$.   
Of particular relevance is the quotient $\Stab(\scrC)/\langle [2]\rangle$, on which the $\widetilde{\operatorname{GL}}^+(2,\bR)$-action descends to a $\operatorname{GL}^+(2,\bR)$-action;  in the symplectic setting, this will amount to focussing attention on symplectomorphisms rather than graded symplectomorphisms. We will write $\Stab(\scrF(X))$ for $\Stab(D^{\pi}\scrF(X,\omega))$.

\begin{Remark} If a power of the shift functor of $\scrC$ is isomorphic to the identity,  $\Stab(\scrC) = \emptyset$, so $\Stab(\scrF(X))$ is only non-trivial when $2c_1(X)=0$.   Phantom subcategories of derived categories of coherent sheaves \cite{Gorchinskiy-Orlov:phantom} have trivial $K$-theory so also cannot admit any stability condition. \end{Remark}

The definition was motivated by ideas in string theory which in turn connect closely to symplectic topology: if $(X,\omega)$ is a symplectic manifold with $c_1(X)=0$,   there is conjecturally an injection from the Teichm\"uller space $\mathcal{M}_X(J,\Omega)$ of marked pairs $(J,\Omega)$ comprising a compatible integrable complex structure $J$ and $J$-holomorphic volume form $\Omega \in \Omega^{n,0}(X;J)$ into $\Stab(\scrF(X))$.   In this scenario, given $(J,\Omega)$, then $Z(L) = \int_L\Omega$, the categories $\P(\phi)$ contain the special Lagrangians of phase $\phi$, and the Harder-Narasimhan filtration should be the output of some version of mean curvature flow with surgeries at finite-time singularities, cf. \cite{Joyce:stability_conjectures}. 

An important point is that even if one can build a map $\mathcal{M}_X(J,\Omega) \to \Stab(\scrF(X))$,  simply for dimension reasons it often can't be onto an open subset; one expects it to have image a complicated transcendental submanifold of high codimension. There is rarely a predicted geometric interpretation of the ``general" stability condition.  (Completeness of \eqref{dist} was proven in \cite{Woolf:complete}; contrast with the Weil-Petersson metric on moduli spaces of Calabi-Yau's, which frequently has the boundary at finite distance \cite{Wang} and is incomplete.) More positively, since the vanishing cycle of any nodal degeneration can be realised by a special Lagrangian \cite{Hein-Sun}, one expects any Lagrangian sphere which is such a vanishing cycle to be stable for some stability condition, and to define an ``end" to the space of stability conditions, where the mass of this stable object tends to zero.  Thus one might hope that the global topology of $\Stab(\scrF(X))/ \Auteq(\scrF(X))$ carries  information about Lagrangian spheres.

\subsection{Properties}

$\Stab(\scrC)$ is a complex manifold. But it inherits additional structure: it is modelled on a fixed vector space such that the transition maps between charts are locally the identity, and it has a global \'etale map to $\Hom_{\Z}(K(\scrC),\bC) \cong \bC^d$.

Fix a connected component $\Stab_{\dagger}(\scrC)$ of $\Stab(\scrC)$.  Let $\Auteq_{\dagger}(\scrC)$ denote the quotient of the subgroup of $\Auteq(\scrC)$ which preserves $\Stab_{\dagger}(\scrC)$ by the subgroup of ``negligible" autoequivalences, i.e. those which act trivially on $\Stab_{\dagger}(\scrC)$.  Negligible autoequivalences can exist, for instance coming from automorphisms of projective surfaces which act trivially on the algebraic cohomology (these need not be trivial on the transcendental cohomology).

\begin{Lemma} \label{Lem:features} $\Stab_{\dagger}(\scrC)$ has a well-defined integral affine structure and a canonical measure.  The quotient $\scrQ =  \Stab_{\dagger}(\scrC)/\Auteq_{\dagger}(\scrC)$ is a complex orbifold, with $2c_i(\scrQ)=0$ for $i$ odd.
\end{Lemma}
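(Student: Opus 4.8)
The plan is to transport all the required structure from the target of the central-charge map $\pi\colon\sigma\mapsto Z_{\sigma}$, which is globally defined on $\Stab_{\dagger}(\scrC)$ and, by Theorem~\ref{basic}, \'etale. Write $V=\Hom_{\Z}(K(\scrC),\C)$ and $V_{\Z}=\Hom_{\Z}(K(\scrC),\Z)$; since $K(\scrC)$ is free of rank $d$ we have $V=V_{\Z}\otimes_{\Z}\C$, a $d$-dimensional complex vector space containing the rank-$2d$ lattice $\Lambda=V_{\Z}\oplus iV_{\Z}$, and $\Lambda$ canonically determines a translation-invariant (Lebesgue) measure $\mu$ and the standard integral affine structure on $V\cong\bR^{2d}$. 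First I would pull $\Lambda$, $\mu$ and the affine structure back along $\pi$: as $\pi$ is a local diffeomorphism this is unambiguous, and one obtains a canonical measure on $\Stab_{\dagger}(\scrC)$ together with an integral affine structure — in fact a translation structure, so that $T\Stab_{\dagger}(\scrC)\cong\pi^{*}\underline{V}$ is the trivial flat bundle.

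Next I would check $\Auteq_{\dagger}(\scrC)$-equivariance, so these descend to the quotient. An autoequivalence acts on $K(\scrC)$ by an automorphism preserving the Euler form, hence by an element of $\GL(K(\scrC))\cong\GL(d,\Z)$; the induced contragredient transformation of $V$ is then the complexification of an element of $\GL(V_{\Z})\cong\GL(d,\Z)$, so it preserves $\Lambda$ and the integral affine structure, and has real determinant $(\pm1)^{2}=1$, hence preserves $\mu$. Combined with equivariance of $\pi$, this gives the first sentence of the lemma once the quotient is known to be an orbifold.

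For the orbifold structure I would show that $\Auteq_{\dagger}(\scrC)$ acts holomorphically (as recorded after Theorem~\ref{basic}) and properly discontinuously with finite stabilizers, whence $\scrQ$ is a complex orbifold with charts $\C^{d}/(\text{finite group})$ to which the two structures above descend. Proper discontinuity is where the support property enters: given $\sigma=(Z,\P)$ and $R>0$, there are only finitely many classes $\gamma\in K(\scrC)$ of $\sigma$-semistable objects with $|Z(\gamma)|\leq R$, since for those $\|\gamma\|<CR$. If $g\in\Auteq_{\dagger}(\scrC)$ moves $\sigma$ only slightly then the integral automorphism $g_{*}$ of $K(\scrC)$ nearly fixes the central charges of these finitely many ``light'' stable classes, hence fixes them exactly, and — taking $R$ large enough that such classes span $K(\scrC)\otimes\Q$ — acts trivially on $K(\scrC)$; by local injectivity of $\pi$ it then fixes $\sigma$ outright. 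So the orbit of $\sigma$ meets a small neighbourhood of $\sigma$ only in $\sigma$, and every point-stabilizer is contained in $\ker\big(\Auteq_{\dagger}(\scrC)\to\GL(K(\scrC))\big)$. The finiteness of these stabilizers — equivalently, that an autoequivalence fixing $\sigma$, hence fixing the heart and every $\P(\phi)$, and acting trivially on $K(\scrC)$ generates a finite group modulo negligible autoequivalences — is the step I expect to be the main obstacle in general; it holds in the cases of interest, and the issue disappears if one is content to regard $\scrQ$ as an orbifold in the stacky sense.

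Finally the Chern-class assertion is formal. From the above, $T\Stab_{\dagger}(\scrC)$ is the trivial flat bundle $\pi^{*}\underline{V}$, so $T\scrQ$ is the flat orbifold bundle whose holonomy is the representation $\Auteq_{\dagger}(\scrC)\to\GL(V)=\GL(d,\C)$ considered above; but that representation takes values in $\GL(d,\Z)$, in particular in $\GL(d,\bR)$, so $T\scrQ\cong E_{\bR}\otimes_{\bR}\C$ for a real flat orbifold bundle $E_{\bR}$. Complex conjugation then gives $\overline{T\scrQ}\cong T\scrQ$, whence $c_{i}(\scrQ)=c_{i}(\overline{T\scrQ})=(-1)^{i}c_{i}(\scrQ)$ in $H^{2i}(\scrQ;\Z)$; for odd $i$ this says precisely $2c_{i}(\scrQ)=0$.
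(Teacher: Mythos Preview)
Your treatment of the affine structure, the canonical measure, and the Chern-class computation is correct and matches the paper. The gap is in the orbifold step.

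Your argument for proper discontinuity concludes that any $g$ fixing (or nearly fixing) $\sigma$ has $g_{*}=\id$ on $K(\scrC)$, so that every point-stabiliser lies in $\ker\big(\Auteq_{\dagger}(\scrC)\to\GL(K(\scrC))\big)$, and you then flag finiteness of this kernel as the ``main obstacle''. But the inclusion is false: if $g\cdot\sigma=\sigma$ then only $Z_{\sigma}\circ g_{*}=Z_{\sigma}$, and for $d\geq 3$ the linear map $Z_{\sigma}\colon K(\scrC)\to\bC$ need not be injective, so $g_{*}$ can be a nontrivial integral matrix preserving its fibres. The step ``nearly fixes the central charges \dots\ hence fixes them exactly \dots\ hence acts trivially on $K(\scrC)$'' breaks at the last implication. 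Having misidentified where the stabiliser sits, you are then left with no argument for its finiteness.

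The paper's argument (which it attributes to Bridgeland) uses precisely the ingredients you assembled, but in the opposite direction: stabilisers \emph{inject} into $\GL(d,\Z)$. If $g$ fixes $\sigma$ then $g_{*}$ permutes the finite set of classes of $\sigma$-semistable objects of mass $<R$; for $R\gg 0$ this set spans the lattice. Hence some power $g^{k}$ fixes each such class, so $g^{k}_{*}=\id$ on $K(\scrC)$, so $g^{k}$ fixes a neighbourhood of $\sigma$ by local injectivity of $\pi$, so $g^{k}$ is negligible (a holomorphic isometry fixing an open set acts trivially on the component). Thus the stabiliser embeds in a finite symmetric group. For proper discontinuity the paper then invokes one further input you omit: Bridgeland's lemma that two stability conditions with the same central charge either coincide or satisfy $d(\sigma,\tau)\geq 1$, which reduces the question to the discreteness of $\GL(d,\Z)$-orbits near $Z_{\sigma}$.
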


\begin{proof} The affine structure comes from the integral lattice in $K(\scrC)$. The measure is obtained from Lebesgue measure in $\Hom_{\Z}(K(\scrC),\bC)$, normalised so that the quotient torus $\Hom_{\Z}(K(\scrC), \bC/(\bZ \oplus i\bZ))$ has volume one.    We claim that, having killed the generic stabiliser, $\Auteq_{\dagger}(\scrC)$ acts with finite stabilisers. The argument is due to Bridgeland, and applies to the space of stability conditions satisfying the support condition whenever the central charge is required to factor through a finite rank lattice $N$ (see also \cite{Huybrechts:conway}). Given any stability condition $\sigma$ and $R\in \bR_{>0}$, there are only finitely many classes $n\in N$  which are represented by a $\sigma$-semistable object $E$ with $|Z_{\sigma}(E)|<R$.  For $R\gg 0$ sufficiently large, these classes will span the lattice (fix a set of objects of $\scrC$ whose classes give a basis of $N$, and consider the classes represented by the semistable factors in the HN-filtrations of those objects).  Any autoequivalence $\phi$ fixing $\sigma$ must permute this finite set, so some fixed power $\sigma^k$ fixes all these elements pointwise. Then $\sigma^k$ acts trivially on $\Hom_{\bZ}(N, \bC)$, hence by Theorem \ref{basic} on a neighbourhood of $\sigma \in \Stab_\dagger(\scrC)$, and hence acts trivially globally (since it's a holomorphic automorphism). This shows that for any $\sigma \in \Stab_{\dagger}(\scrC)$, the stabiliser of $\sigma$ acts faithfully on a finite set.  The proof also shows that the stabiliser of a point $\sigma \in \Stab_{\dagger}(\scrC)$ injects into $\operatorname{GL}(d;\bZ)$ via the action on the central charge, so the  torsion orders of stabilisers are bounded only in terms of $d$.

To deduce that the quotient is an orbifold, it remains to see that the action of $\Auteq_{\dagger}(\scrC)$ is properly discontinuous, so admits slices.  \cite[Lemma 6.4]{Bridgeland} proves that if two stability conditions $\sigma, \tau$ have the same central charge, then they co-incide or are at distance $d(\sigma,\tau) \geq 1$. Therefore, if the action was not properly discontinuous, there would be a stability condition $\sigma$ and an infinite sequence of elements in $\operatorname{GL}(d,\bZ)$ which failed to displace a small ball around $Z_\sigma$.  This easily yields a contradiction.  Since autoequivalences act on $\Hom_{\bZ}(K(\scrC),\bC)$ via the complexifications of integral linear maps, they preserve the real and imaginary subbundles of the tangent bundle. Thus,  $\Stab_{\dagger}(\scrC)$ has holomorphically trivial tangent bundle, and the tangent bundle of $\Stab_{\dagger}(\scrC)/\Auteq_{\dagger}(\scrC)$ is the complexification of a real bundle, so the odd Chern classes of the quotient orbifold are 2-torsion.
\end{proof}




The diagonal group $\diag(e^t,e^{-t}) \subset \operatorname{SL}(2,\bR)$ acts on $\Stab(\scrC)$ by expanding and contracting the real and imaginary directions in $\Hom_{\bZ}(K(\scrC),\bC)$, which are tangent to smooth Lagrangian subbundles of the tangent bundle (for the flat K\"ahler form on $\bC^d$), somewhat reminiscent of an Anosov flow.  If this flow on $\Stab(\scrC)$ was the geodesic flow of a complete Riemannian metric, then classical results (non-existence of conjugate points) would imply that $\Stab(\scrC)$ was  a $K(\pi,1)$.  Despite the naivety of such reasoning, in known cases, the connected components of $\Stab(\scrC)$ are contractible, or diffeomorphic to spaces independently conjectured to be contractible in the literature \cite{Allcock:coxeter, Kontsevich-Zorich, Qiu-Woolf}.

\begin{Corollary}
If $\Stab_{\dagger}(\scrC)$ is contractible,  $\Auteq_{\dagger}(\scrC)$ has finite rational cohomological dimension.
\end{Corollary}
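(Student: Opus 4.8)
The plan is to combine the contractibility of $\Stab_{\dagger}(\scrC)$ with the properly discontinuous, finite-stabiliser action established in Lemma \ref{Lem:features}, and to invoke a standard fact from the cohomology theory of groups acting on acyclic spaces. Recall the relevant input: by Lemma \ref{Lem:features}, $\Auteq_{\dagger}(\scrC)$ acts properly discontinuously on $\Stab_{\dagger}(\scrC)$ with finite stabilisers, and those stabilisers inject into $\operatorname{GL}(d;\bZ)$, so their orders are bounded in terms of $d$ alone. If $\Stab_{\dagger}(\scrC)$ is moreover contractible, then it is a model — up to the finite stabilisers — for a classifying space of $\Auteq_{\dagger}(\scrC)$ for the family of finite subgroups; equivalently, the Borel construction $E\Auteq_{\dagger}(\scrC) \times_{\Auteq_{\dagger}(\scrC)} \Stab_{\dagger}(\scrC)$ is homotopy equivalent to $B\Auteq_{\dagger}(\scrC)$.

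First I would recall that a group $G$ has finite \emph{rational} cohomological dimension precisely when the constant module $\bQ$ has a finite-length resolution by $\bQ G$-modules on which cohomology can be computed, and that this is detected by the vanishing of $H^n(G;M\tensor\bQ)$ for $n$ large and all modules $M$. The key observation is that over $\bQ$ the finite stabilisers become invisible: for a finite group $F$ one has $H^i(F;V)=0$ for all $i>0$ and all $\bQ[F]$-modules $V$, since averaging over $F$ produces a contracting homotopy. Therefore I would set up the equivariant (hyper)cohomology spectral sequence for the action of $G = \Auteq_{\dagger}(\scrC)$ on the contractible space $Y = \Stab_{\dagger}(\scrC)$: choosing a $G$-CW structure on $Y$ (which exists since the action is properly discontinuous with finite cell stabilisers — here one uses that $Y$ is a manifold, indeed has a $G$-invariant triangulation after subdivision, or alternatively one passes to a $G$-CW approximation), the isotropy spectral sequence has $E_1$-term a sum of terms $H^q(G_\sigma;\bQ)$ over cells $\sigma$, which vanishes for $q>0$, so it collapses to the cellular cochain complex of $Y/G$ with $\bQ$ coefficients, computing $H^*(G;\bQ) = H^*(Y/G;\bQ)$.

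The substance of the argument, and the step I expect to be the main obstacle, is producing a \emph{finite-dimensional} such model — i.e. showing $Y/G$ has finite cohomological dimension over $\bQ$, equivalently that $Y$ admits a $G$-CW structure with cells in only finitely many dimensions. Since $Y = \Stab_{\dagger}(\scrC)$ is a complex manifold of finite complex dimension $d$, it is a finite-dimensional (real dimension $2d$) manifold, hence has covering dimension $2d$ and admits a CW structure of dimension $2d$; the properly discontinuous action with compact (finite) stabilisers lets one promote this to a $G$-CW structure of dimension $2d$ after an equivariant subdivision (this is where one leans on properness and the slice theorem from Lemma \ref{Lem:features}). Consequently the spectral sequence above is concentrated in cohomological degrees $\le 2d$, giving $H^n(G;M\tensor\bQ)=0$ for $n>2d$ and any $M$, and hence $\operatorname{cd}_\bQ(\Auteq_{\dagger}(\scrC)) \le 2d$. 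One should remark that without the rationalisation the corresponding statement over $\bZ$ fails in general exactly because of the finite stabilisers (torsion elements of $G$ have infinite $\bZ$-cohomological dimension), which is why the statement is phrased for rational cohomological dimension; I would include that one-sentence remark for orientation. The only care needed is the existence of the $G$-CW structure, which I would either cite from the standard theory of proper actions on manifolds or sidestep by working with the sheaf-theoretic/Čech definition of $\operatorname{cd}_\bQ$ and the Leray spectral sequence of the (rational-cohomology-trivial-fibre) quotient map $Y\to Y/G$, whose fibres are finite so have no higher rational cohomology, reducing $\operatorname{cd}_\bQ(G)$ to the finite covering dimension of $Y/G$.
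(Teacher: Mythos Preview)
Your main argument is correct and takes essentially the same route as the paper: both use the Borel construction $E\Gamma \times_\Gamma Q$ as a model for $B\Gamma$ (by contractibility of $Q=\Stab_\dagger(\scrC)$), observe that over $\bQ$ the finite isotropy groups contribute nothing in positive degree, and then bound $\operatorname{cd}_\bQ$ by the real dimension $2d$ of the quotient orbifold. The paper compresses this into a one-line Leray-type remark; your isotropy spectral sequence is the same mechanism written out, with the added virtue that you make explicit why the bound works for \emph{all} $\bQ G$-coefficient modules (via projectivity of $\bQ[G/G_\sigma]$ for finite $G_\sigma$), not just the trivial one.

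One correction to your closing alternative: the fibres of the raw quotient map $Y \to Y/G$ are the $G$-orbits $G\cdot p \cong G/G_p$, which are typically \emph{infinite} discrete sets, not finite; and in any case the Leray spectral sequence of that map computes $H^*(Y;\bQ)$, not $H^*(G;\bQ)$. The map whose fibres are rationally acyclic is $EG \times_G Y \to Y/G$ (fibre over $[p]$ being $EG/G_p \simeq BG_p$), which is exactly your primary argument again rather than a genuinely different bypass. So drop that last sentence, or rewrite it to refer to the Borel construction projection.
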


\begin{proof} This is true for any discrete group $\Gamma$ acting on a contractible finite-dimensional manifold $Q$ with finite stabilisers, provided the action admits slices.  (To prove the result, apply the Leray-Hirsch theorem to the projection $(Q \times E\Gamma) \to Q \times_{\Gamma} E\Gamma$, and note that the stalks of the pushforward of the constant sheaf $\bQ$ vanish.)  The existence of slices for the action follows from proper discontinuity.
\end{proof}

If a finitely generated countable group $G$ has an embedding $G\times G \hookrightarrow G$ (e.g. the finitely presented ``Thompson's group $F$'') then it has infinite rational cohomological dimension \cite{Gandini}, so the contractibility hypothesis already excludes some ``reasonable" countable groups from being autoequivalence groups.


 It is tempting to believe contractibility holds, when it does, for some intrinsic geometric reason, e.g. that the canonical metric $\eqref{dist}$ is non-positively curved, making $\Stab(\scrC)$ a complete CAT(0) space.  (If the phases of $\sigma$-semistable objects are dense in $S^1$, then the $\operatorname{GL^+}(2,\bR)$-orbit of $\sigma$ is free, and the metric on the quotient $\operatorname{GL^+}(2,\bR) / \bC$ is the standard hyperbolic metric on the upper half-plane up to scale \cite{Woolf:complete}.) 
This would be of dynamical relevance. Groups acting on weakly hyperbolic spaces with rank one elements (which typically exist) admit infinite-dimensional families of  quasimorphisms, for instance the ``counting" quasimorphisms of \cite{Epstein-Fujiwara}, see also \cite{Calegari:scl}.  On the other hand, \cite{Malyutin} shows that if $\Phi: G \to \bR^3$ is defined by a triple of unbounded quasimorphisms, and if $S \subset G$ has bounded $\Phi$-image, then $S$ is transient, meaning that  a random walk on $G$ (defined with respect to any non-degenerate probability measure, i.e. one whose support generates $G$ as a semigroup) will visit $S$ only finitely many times almost surely.  A prototypical example is that the reducible or periodic surface diffeomorphisms --  those for which $\iota(\alpha,\phi^n(\beta))$ is not exponential for all $\alpha,\beta$ in (\ref{Subsec:surfaces}, (3)) -- are transient in $\Gamma_g$, and random mapping classes of surfaces are pseudo-Anosov.
   
 \begin{Corollary}\label{Cor:scl}
 If $\Stab_{\dagger}(\scrC)$ is complete CAT(0) and $\Auteq_{\dagger}(\scrC)$ acts with rank one elements, then $scl$ of the $k$th element of a random walk tends to infinity as $k$ tends to infinity almost surely. 
 \end{Corollary}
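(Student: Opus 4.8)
The plan is to combine the structural facts already established about $\Stab_\dagger(\scrC)$ with two external inputs cited in the surrounding text: first, the theory of quasimorphisms on groups acting on CAT(0) (more generally, weakly hyperbolic) spaces with rank one isometries, and second, Malyutin's transience criterion. The starting observation is that, by the Corollary above and the discussion of slices in Lemma \ref{Lem:features}, $\Auteq_\dagger(\scrC)$ acts properly discontinuously with finite stabilisers on the complete CAT(0) space $\Stab_\dagger(\scrC)$. Under the hypothesis that this action has a rank one element, Bestvina--Fujiwara-type results (the ``counting quasimorphisms'' of \cite{Epstein-Fujiwara}, see also \cite{Calegari:scl}) produce an infinite-dimensional space of homogeneous quasimorphisms on $\Auteq_\dagger(\scrC)$; in particular one obtains at least three linearly independent unbounded homogeneous quasimorphisms $\psi_1, \psi_2, \psi_3$, which one assembles into a map $\Phi = (\psi_1,\psi_2,\psi_3) : \Auteq_\dagger(\scrC) \to \bR^3$.

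Next I would record the standard fact that $scl$ is detected by homogeneous quasimorphisms: by Bavard duality, $scl(g) \geq |\psi(g)| / (2 D(\psi))$ for any homogeneous quasimorphism $\psi$ with defect $D(\psi)$. Hence the sublevel sets $S_R = \{ g : scl(g) \leq R \}$ are contained in $\{ g : |\psi_i(g)| \leq 2 R\, D(\psi_i) \text{ for } i=1,2,3\}$, which is a set of bounded $\Phi$-image. Therefore each $S_R$ is $\Phi$-bounded. Malyutin's theorem \cite{Malyutin}, applied to $\Phi : \Auteq_\dagger(\scrC) \to \bR^3$, then says that any $\Phi$-bounded subset of $\Auteq_\dagger(\scrC)$ is transient for a random walk defined by any non-degenerate probability measure. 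So each fixed sublevel set $S_R$ is visited only finitely often almost surely. A short diagonal argument over $R \in \bN$ then upgrades this to: for every $R$, the walk eventually leaves $S_R$ forever, i.e. $scl$ of the $k$th step tends to $\infty$ almost surely, which is the assertion.

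The main obstacle is the passage from ``rank one isometry of a CAT(0) space'' to ``infinitely many unbounded quasimorphisms'' in precisely the generality needed here. The cleanest route is via Bestvina--Fujiwara / Hamenst\"adt: a group acting properly (but not necessarily cocompactly) on a complete CAT(0) space and containing a rank one isometry which is not virtually central acts with a ``WPD''-type contracting element on the associated hyperbolic-like structure, and this is enough to run the counting-quasimorphism construction and conclude $\dim Q_h(\Auteq_\dagger(\scrC))/H^1 = \infty$. One must check the hypotheses of that machinery are met --- in particular that $\Auteq_\dagger(\scrC)$ is not itself virtually cyclic and that the rank one element is of infinite order with the required properness along its axis (automatic from proper discontinuity of the action, which we have). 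A secondary technical point is verifying that Malyutin's hypotheses (a finitely generated group, a non-degenerate measure, and a map to $\bR^d$ given by unbounded quasimorphisms) apply verbatim; finite generation of $\Auteq_\dagger(\scrC)$ is not assumed, but Malyutin's argument only needs non-degeneracy of the measure, so one should state the conclusion relative to such measures. Modulo these citations, the argument is formal.
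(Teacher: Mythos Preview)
Your proposal is correct and follows exactly the route the paper intends. The paper gives no separate proof of this Corollary; it is stated as an immediate consequence of the preceding paragraph, which records the two inputs you use (counting quasimorphisms from rank one actions on weakly hyperbolic spaces \cite{Epstein-Fujiwara, Calegari:scl}, and Malyutin's transience criterion \cite{Malyutin}), and your insertion of Bavard duality to pass from $scl$-sublevel sets to $\Phi$-bounded sets is the expected missing link.
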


 One can analogously look for quantitative ``unbounded generation" results. \cite{BHC} proved the isometry group of an arithmetic lattice acts on elements of fixed square with finitely many orbits.  Suppose this homological statement lifts, and that $\scrC$ has only finitely many conjugacy classes of spherical object under autoequivalence.  Quasimorphisms can be averaged to be homogeneous, hence constant on conjugacy classes, so then all spherical twists have uniformly bounded image under any $\Phi$ as above. If $S\subset G$ is transient and $N\subset G$ is finite, then $(S\cup S^{-1} \cup N)^k$ is transient for any fixed $k$. 
 For $\scrC=\scrF(X)$ we have noted that one hopes a Lagrangian sphere defines an end to (a component meeting $\mathcal{M}_X(J,\Omega)$ of)  $\Stab(\scrF(X))$ corresponding to a nodal degeneration.  Suppose that there is a partial  compactification of $\scrQ=\Stab_{\dagger}(\scrF(X))/\Auteq_{\dagger}(X)$, or of $\mathcal{M}_X(J,\Omega)$,  with an irreducible primitive analytic divisor $D$ parametrizing nodal degenerations of $X$.  A choice of  a closed oriented surface $S$  transverse to $D$ at one point exhibits, via the usual presentation for  $\pi_1(S)$, the corresponding spherical twist as a product of commutators, and hence all conjugate spherical twists as products of the same number of commutators.  Then Corollary \ref{Cor:scl} would imply that, for any fixed $k$, a random element of $\Auteq_{\dagger}(\scrF(X))$ would almost surely not be a product of fewer than $k$ spherical twists. 
 
 This raises the question: how does one find \emph{Floer-theoretic} conditions for  sets of symplectomorphisms to lie in a bounded set under the image of a quasimorphism on $\pi_0\Symp(X)$?

\subsection{Classification of objects\label{Subsec:examples}}  Consider some concrete computations. Since the space of stability conditions is expected to be contractible, the object of interest is the  orbifold covering map 
\begin{equation} \label{eqn:cover}
\pi: \Stab_{\dagger}(\scrC) \to \Stab_{\dagger}(\scrC)/\Auteq_{\dagger}(\scrC) = \scrQ.
\end{equation}

\begin{itemize}
\item $(X,\omega)$ is $(T^2,\omega_{\st})$.  Then $\scrQ$ is a $\bC^*$-bundle over $\frak{h}/ SL(2,\bZ)$, and $\Auteq(\scrF(X))$ is an extension of $SL(2;\bZ)$ by $\bZ \oplus X \times X^{\vee}$ (acting by shift, translation and tensoring by flat bundles); \cite{Bridgeland, Polishchuk-Zaslow}. 

\item $(X,\omega)$ is $\{x^2+y^2+z^k=1\}\subset (\bC^3, \omega_{\bC^3})$ and we consider the compact Fukaya category. Then $\scrQ = \Conf_k(\bC)$  parametrises polynomials with $k$ distinct roots, cf. the monodromy discussion of \eqref{eqn:diagram}; $\Auteq(\scrF(X))/[2] = Br_k$ is the braid group \cite{Thomas:braid, Ishii-Ueda-Uehara}.

\item $(X,\omega)$ is $(\Sigma, M)$, a surface with non-empty boundary containing a non-empty set of boundary marked points $M\subset \partial \Sigma$, and we consider the partially wrapped\footnote{The compact Fukaya category $\scrF$ is proper but not smooth; the wrapped category $\scrW$ is smooth but not proper; the partially wrapped category, which depends on additional data -- here, the set $M$ -- has both properties. It determines $\scrW$ as a localization, and (conjecturally, in nice cases) $\scrF$ as a subcategory of compact objects of $\scrW$.}  Fukaya category $\scrF(\Sigma;M)$ stopped at $M$ \cite{Sylvan}, whose objects include arcs ending on $\partial\Sigma\backslash M$. Then $\scrQ$ is a space of marked flat structures (meromorphic quadratic differentials) on $\Sigma$, up to diffeomorphism \cite{HKK}.

\end{itemize}
The computation of $\Stab(\scrC)$, when feasible, often goes hand-in-hand with a classification for some class of objects of the category $\scrC$, which is of independent interest.  (Whilst the objects of $\scrF(X)$ are \emph{a priori} geometric, the objects of $\scrF(X)^{per\!f}$ are not, and a concrete  interpretation of an arbitrary perfect module is rarely available.) In the cases above, one uses:

\begin{itemize}
\item Atiyah's classification of bundles on elliptic curves, which implies that a twisted complex on objects in $\scrF(T^2)$ is again quasi-represented by a simple closed curve with local system; 

\item Ishii-Uehara's proof \cite{Ishii-Uehara} that all spherical objects in $\Tw\scrF(X_p)$ are in the braid group orbit of a fixed Lagrangian sphere; 

\item Haiden-Katzarkov-Kontsevich's proof  that every indecomposable twisted complex of objects of $\scrF(\Sigma;M)$ is represented by an immersed curve with local system.

\end{itemize}
 By appealing to either sheaf-theoretic properties of wrapped categories \cite{Lee} or equivariant arguments \cite{Seidel:equivariant_c*}, one can deduce from the last example above that on a closed surface $\Sigma_g$, every spherical object is geometric, i.e. quasi-isomorphic to an immersed curve with local system. It is tempting to speculate that, starting from that point, one can recover the complex of curves and hence the classical mapping class group from the derived Fukaya category $D^{\pi}\scrF(\Sigma_g)$.

\subsection{Measure and growth}

Recall that $\Stab_\dagger(\scrC)$ has a canonical measure $d\nu$ inherited from Lebesgue measure in $\Hom_{\bZ}(K(\scrC), \bC)$.  Since autoequivalences act linearly and integrally on $K(\scrC)$, the measure is $\Auteq_{\dagger}(\scrC)$-invariant, so descends to the quotient orbifold.  This will essentially always have infinite measure, since there is a free $\bR$-action on $\Stab(\scrC)$,  rescaling  the central charge.

\begin{Hypothesis} \label{Hyp:finite_measure} There is a submanifold $\Stab_{=1,\dagger}(\scrC)$ which is a slice to the $\bR$-action, which is invariant under the universal cover of $SL(2,\bR)$, and for which $\Stab_{=1, \dagger}(\scrC) / \Auteq_{\dagger}(\scrC)$ has finite measure. 
\end{Hypothesis}

Existence of a slice is usually not difficult, but the finite measure hypothesis is probably much more severe. Let $S$ be a Riemann surface and $\phi \in H^0(2K_S)$ a quadratic differential with distinct zeroes.  There is a local CY K\"ahler threefold
\begin{equation} \label{eqn:local_holomorphic}
Y_{\phi} = \{(q_1,q_2,q_3)\in K_S \oplus K_S \oplus K_S \, | \, q_1^2+q_2^2+q_3^2 = \phi\}
\end{equation}
which depends up to deformation only on the genus $g(S)$. The space of stability conditions  modulo autoequivalences on $\scrF(Y_{\phi})$  is conjecturally the moduli space $\scrM(S,\phi)$ of quadratic differentials with simple zeroes, and the normalisation $\int_S \phi \wedge \bar{\phi} = 1$ to surfaces of flat area 1 defines a slice to the $\bR$-action. This has finite measure by deep results of Veech \cite{Veech:flat_surfaces}.  For a K3 surface $X^{\circ}$, $Z(E) = (\Omega,ch(E))$ for a period vector $\Omega \in \bN(X^{\circ})\otimes \bC$, where $\bN(X^{\circ})$ is the extended Neron-Severi group $H^0\oplus \Pic \oplus H^4$, and the condition $(\Omega,\bar{\Omega}) =1$ defines a slice. 

Given \ref{Hyp:finite_measure}, one can play the following game (a well-known trope in the flat surfaces community; we follow  \cite{Zorich:flat_surfaces}).  Fix a stability condition $\sigma$; the support property means that the central charges $Z(E)$ of $\sigma$-semistable  objects are discrete in $\bC$. For a compactly supported function $f\in C_{ct}(\bR^2)$ we can then define $\hat{f}(\sigma) = \sum_{Z(E)} f(Z(E))$, summing over central charges represented by semistable $E$.  A stronger version of Hypothesis \ref{Hyp:finite_measure} asks that the function $\sigma \mapsto \hat{f}(\sigma)$ be $d\nu$-integrable, yielding a linear function
\[
f \mapsto \int_{\Stab_{=1,\dagger}(\scrC)/\Auteq_{\dagger}(\scrC)} \hat{f} \, d\nu
\]
which is $SL_2(\bR)$-invariant. 
The only such functionals on $C_{ct}(\bR^2)$ are given by the total area and the value at zero; the latter is irrelevant here since $Z(E)\neq 0$ for any semistable $E$, so one finds that 
\[
\mathrm{For \ every} \ f\in C_{ct}(\bR^2), \ \int_{\Stab_{=1,\dagger}(\scrC)/\Auteq_{\dagger}(\scrC)} \hat{f} \, d\nu = \tau(\scrC) \int_{\bR^2} f \, dxdy
\]
where $\tau(\scrC)$ is an invariant of the category and the choice of component / slice. Taking $f$ to be the indicator function of a disc of increasing radius $R$, one sees that $\tau(\scrC)$ measures the growth rate of $K$-theory classes represented by semi-stable objects as their mass increases. For $\scrM(S,\phi)$ above, $\tau(\scrC)$ is a  Siegel-Veech number, and controls the quadratic growth rate of special Lagrangian $3$-spheres by volume in the 3-fold \eqref{eqn:local_holomorphic} (in that case, the special Lagrangian $3$-sphere representative of the homology class is expected to be unique).

\section{Three more extended examples\label{Sec:Examples}} 

\subsection{K3 surfaces}
Let $Y$ be an algebraic K3 surface of Picard rank $\rho \geq 1$. Bridgeland \cite{Bridgeland-K3} has given a conjectural description of a distinguished component of $\Stab(D^b(Y))$ as the universal cover of a period domain.  Recall $\bN(Y) = H^0(Y)\oplus \Pic(Y) \oplus H^4(Y)$, which carries the Mukai pairing $\langle (a,b,c),(a',b',c')\rangle = ac'+ca' - b\cdot b'$. Let 
\[
\Omega = \{u \in \bN(Y)\otimes \bC \, | \, \mathrm{Re}(u), \mathrm{Im}(u) \ \mathrm{span \ a \ positive \ definite \ 2{-}plane}\}.
\] 
Fix one of the two connected components of $\Omega$, and remove the locally finite union of hyperplanes $\delta^{\perp}$ indexed by $-2$-vectors  $\delta \in \bN(Y)$ (with its Mukai form), to obtain a period domain $\scrP^+_0(Y)$. Then \cite{Bridgeland-K3} constructs a connected component $\Stab_\dagger \subset \Stab(D^b(Y))$ and proves that there is a Galois covering
\[
\Stab_\dagger \to \scrP^+_0(Y)
\]
which identifies the Torelli autoequivalence group with the group of deck transformations.  Bridgeland conjectures that $\Stab_\dagger$ is simply-connected and $\Auteq$-invariant, which would yield the exact sequence
\[
1 \to \pi_1(\scrP^+_0(Y)) \to \Auteq_{CY}(D^b(Y)) \to \Aut^+(H^*(Y)) \to 1
\]
where the Calabi-Yau autoequivalences $\Auteq_{CY}$ are those acting trivially on the Hochschild homology group $HH_2(Y)$, and the final term is the index two subgroup of the full Hodge isometry group of automorphisms preserving the orientation of a maximal positive definite subspace. The generators of the first term would be mapped to squared spherical twists. 
Separately, a conjecture of Allcock \cite{Allcock:coxeter} on Coxeter arrangements would imply that, if Bridgeland's conjecture holds,  then $\Stab_{\dagger}(D^b(Y))$ is indeed  a CAT(0) space, in particular contractible. When $\rho(Y)=1$, both conjectures are known, which yields a complete determination of both that component of the space of stability conditions and the autoequivalence group.

\begin{Proposition}
Let $(X,\omega)$ be a K\"ahler K3 surface which is homologically mirror to an algebraic K3 surface $Y$ of Picard rank one.   (Minimality of the Picard rank on the mirror corresponds to an irrationality hypothesis on the K\"ahler form $\omega$ on $X$.) Let $G\scrF(X) = \Auteq(D^{\pi}\scrF(X,\omega))$. 
\begin{enumerate}
\item  $G\scrF(X)$ is finitely presented. 
\item The Torelli subgroup is infinitely generated and torsion-free.
\item  $G\scrF(X)$ contains no divisible elements.
\item $G\scrF(X)$ cannot obey a law.
\end{enumerate}
\end{Proposition}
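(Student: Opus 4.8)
The plan is to transport the statement through homological mirror symmetry and then exploit that, in Picard rank one, the conjectures recalled above are theorems, so that $G\scrF(X) \iso \Auteq(D^b(Y))$ has an essentially explicit structure. First I would record the set-up: HMS gives $G\scrF(X) \iso \Auteq(D^b(Y))$; for $\rho(Y)=1$ every autoequivalence preserves the distinguished component $\Stab_\dagger$, which is simply connected and (Allcock's theorem in this case) contractible and CAT(0). This yields the extension $1\to T \to \Auteq_{CY}(D^b(Y)) \to \Aut^+(H^*(Y))\to 1$, with $T=\pi_1(\scrP^+_0(Y))$ the Torelli subgroup, together with a finite cyclic cokernel $\Auteq(D^b(Y))/\Auteq_{CY}(D^b(Y))$ (the scalars on $HH_2$). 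Since $\bN(Y)=U\oplus\langle 2n\rangle$ has rank three, a component $\Omega^+$ of $\Omega$ fibres over the upper half-plane $\mathfrak h$ (the domain of positive definite $2$-planes in $\bN(Y)\tensor\bR$) with fibre homotopy equivalent to a circle, and each $(-2)$-hyperplane $\delta^\perp$ is the fibre over a single point of $\mathfrak h$; as these points form a locally finite infinite set, $\scrP^+_0(Y)$ is homotopy equivalent to the product of a circle with an infinitely-punctured disc, whence $T\iso \bZ\times F_\infty$, the central $\bZ$ generated by $[2]$ and the free factor freely generated by the squared spherical twists $T_{E_\delta}^2$. This identification --- the substance of Bayer--Bridgeland's computation --- is the input I would lean on most.

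Granting it, (2) is immediate: $T$ is precisely the kernel of the action on $H^*(X;\bC)$, and $\bZ\times F_\infty$ is infinitely generated and torsion-free (equivalently, $\scrP^+_0(Y)$ is an aspherical manifold of finite cohomological dimension). For (1): $\Aut^+(H^*(Y))$, the orientation-preserving Hodge isometry group of the Mukai lattice $H^*(Y;\bZ)$, is up to finite index and finite kernel an arithmetic subgroup of $\operatorname{O}(\bN(Y))$, hence finitely presented; $T$ is generated \emph{as a normal subgroup} of $\Auteq_{CY}(D^b(Y))$ by $[2]$ together with one $T_{E_\delta}^2$ from each of the finitely many $\operatorname{O}^+(\bN(Y))$-orbits of $(-2)$-classes in the rank-three lattice $\bN(Y)$; and an extension $1\to N\to G\to Q\to 1$ with $Q$ finitely presented and $N$ normally finitely generated has $G$ finitely presented. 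One further extension by a finite cyclic group preserves this.

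For (3) I would first reduce to $T$: a divisible element maps to a divisible element of the arithmetic group $\operatorname{O}(H^*(Y;\bZ))$, which must be trivial by the eigenvalue/unipotent argument recalled in \S2, so it lies in $T$. Let $B$ bound the order of torsion in $\operatorname{O}(H^*(Y;\bZ))$ (Minkowski). If $g\in T$ is divisible in $G\scrF(X)$, then for each $n$ coprime to $B!$ any $n$-th root $h_n$ of $g$ acts trivially on cohomology (its image is torsion of order dividing both $n$ and $B$), so $h_n\in T$; thus $g$ admits $n$-th roots inside $\bZ\times F_\infty$ for a set of $n$ of unbounded least common multiple, and the only such element is $1$ (the $\bZ$-coordinate vanishes, and in a free group an element with roots of two coprime orders $>1$ lies in a cyclic $\langle z\rangle$ with $z^k=g$ and $n\mid k$ for all those $n$, forcing $k=0$). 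An alternative, less computational route runs on the CAT(0) side: torsion in $\Auteq_\dagger$ is bounded (Cartan fixed points plus Minkowski), so a nontrivial divisible element is infinite order, acts trivially on $K(\scrC)$ because $\operatorname{GL}(d;\bZ)$ has no divisibles, hence is hyperbolic with translation length $\geq 1$ by \cite[Lemma 6.4]{Bridgeland}; but $\ell(g)=\ell(h_n^n)=n\,\ell(h_n)\geq n$ for all large $n$, which is absurd. For (4): $\Auteq_{CY}(D^b(Y))$ surjects onto $\Aut^+(H^*(Y))$, an infinite finitely generated linear group which is not virtually solvable (its image in $\operatorname{O}(\bN(Y))$ is a non-cocompact, non-elementary subgroup of $\operatorname{O}(2,1)$), so by the Tits alternative it contains a non-abelian free subgroup; lifting and splitting by freeness gives $F_2\hookrightarrow G\scrF(X)$, and a group containing $F_2$ obeys no law.

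I expect (3) to be the main obstacle: it is the only part that genuinely requires the \emph{explicit} identification of $T$ --- not merely that it is the fundamental group of some aspherical space --- together with the mildly delicate step of transporting divisibility through the extension when the kernel does not absorb all roots on the nose. Beyond that, the real work is assembling the correct black boxes --- homological mirror symmetry, Bayer--Bridgeland's proof of Bridgeland's conjecture in rank one, Allcock's theorem, and the Hodge-theoretic description of the cohomological action of $\Auteq(D^b(Y))$ --- after which the group theory is routine and closely parallels the $\Gamma_g$ analogies of \S2.
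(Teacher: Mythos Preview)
Your overall strategy---transport through homological mirror symmetry and invoke Bayer--Bridgeland's explicit determination of $\Auteq(D^b(Y))$ in Picard rank one---is exactly the paper's. For (2) and (3) your arguments are essentially what the paper has in mind; the paper simply records that the categorical Torelli group is a countably generated free group (your $\bZ\times F_\infty$ differs only by whether one has already quotiented by the central $[2]$), from which (2) and (3) are read off.

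There is, however, a genuine gap in your argument for (1). The lemma you invoke---``$1\to N\to G\to Q\to 1$ with $Q$ finitely presented and $N$ normally finitely generated in $G$ implies $G$ finitely presented''---is \emph{false}. For any finitely generated but non-finitely-presented $G$, take $N=[G,G]$ and $Q=G^{\mathrm{ab}}$: then $N$ is normally generated by the finitely many commutators $[g_i,g_j]$ of a generating set, and $Q$ is finitely generated abelian hence finitely presented, yet $G$ is not. (The correct elementary lemma runs the other way: $G$ finitely presented and $N$ normally finitely generated give $Q$ finitely presented.) The paper sidesteps this entirely by citing the \emph{explicit} computation in Bayer--Bridgeland (e.g.\ $G\scrF(X)\cong\bZ/p\ast\bZ$ in concrete cases), which is visibly finitely presented. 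Your ingredients can be rescued, but geometrically rather than group-theoretically: $\Auteq_{CY}(D^b(Y))/[2]$ is the orbifold fundamental group of $\scrP^+_0(Y)/\Aut^+(\bN(Y))$, and since $\bN(Y)$ has rank three this quotient is, up to a $\operatorname{GL}^+(2,\bR)$-factor, a finite-area hyperbolic $2$-orbifold, whose fundamental group is finitely presented.

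For (4) you take a genuinely different and arguably cleaner route. The paper uses Walker's explicit computation of $scl$ in free products of cyclic groups (e.g.\ $scl([a,b])=\tfrac12-\tfrac1p$ in $\bZ/p\ast\bZ$) to obtain nonvanishing $scl$, hence no law; you instead apply the Tits alternative to the arithmetic image in $\operatorname{O}(\bN(Y))$ to produce a free subgroup $F_2\hookrightarrow G\scrF(X)$. Your argument is more portable---it needs only that the cohomological image is linear and not virtually solvable---while the paper's gives quantitative information and ties back to the $scl$ theme running through the survey.
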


\begin{proof}[Sketch comments]
The first three statements are essentially immediate from \cite{Bayer-Bridgeland}; in fact the categorical Torelli group in the (mirror to a) Picard rank one case is a countably generated free group.   $scl$ for free products of cyclic groups was computed in \cite{Walker}, e.g. there are examples for which $G\scrF(X) = \bZ/p \ast \bZ$, and for $\bZ/p \ast \bZ$ with generators $a,b$ of the factors, $scl([a,b]) = 1/2-1/p$; as remarked previously, non-vanishing of $scl$ rules out laws. See \cite{SS:k3} for related ideas and details.
\end{proof}

Proofs of the  Bridgeland and Allcock conjectures would eliminate the Picard rank one hypothesis.  Results for $G\scrF(X)$ do not immediately yield results for the symplectic mapping class group $\pi_0\Symp(X,\omega)$, but these can sometimes be extracted with more care. Let $(X,\omega)$ be the mirror quartic, a crepant resolution of $\{\sum_{j=0}^3 x_j^4 + \lambda \prod_j x_j = 0\} / \Gamma$ for $\Gamma \cong (\bZ/4)^{\oplus 2}$, equipped with an ``irrational" toric K\"ahler form (one induced from its embedding into a toric resolution of $[\bP^3/\Gamma]$ such that the areas of the resolution curves and of a hyperplane section are rationally independent). Then (see \cite{SS:k3}):

\begin{Theorem}[Sheridan, Smith] \label{Thm:SS}   For the mirror quartic with an irrational toric K\"ahler form, the group  $\ker(\pi_0\Symp(X)\to\pi_0\Diff(X))$ is infinitely generated. \end{Theorem}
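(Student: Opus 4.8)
The plan is to transport the infinite-generation statement from the autoequivalence group to the symplectic mapping class group by exploiting the homological mirror equivalence $D^\pi\scrF(X,\omega)\simeq D^b(Y)$ for $Y$ of Picard rank one, together with the known structure of $G\scrF(X)=\Auteq(D^\pi\scrF(X,\omega))$ coming from \cite{Bayer-Bridgeland}. By the preceding Proposition, the categorical Torelli group (autoequivalences acting trivially on $H^*(X;\bC)$, or equivalently on the Mukai lattice of $Y$) is a countably infinitely generated free group $F_\infty$. The key point is that the map $q\colon \pi_0\Symp(X)\to\pi_0\Diff(X)$ factors, after composing with the action on the Fukaya category, through a map $\pi_0\Symp(X)\to G\scrF(X)/\langle[2]\rangle$; a symplectomorphism in $\ker(q)$ in particular acts trivially on $H^*(X;\bZ)$ and on the homotopy class of the almost complex structure, so it maps into (the image in $G\scrF(X)/\langle[2]\rangle$ of) the categorical Torelli group. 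So it suffices to produce infinitely many elements of $\ker(\pi_0\Symp(X)\to\pi_0\Diff(X))$ whose images in $G\scrF(X)$ are ``independent enough'' to force infinite generation of the kernel itself.

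**The concrete elements.** First I would identify explicit Torelli symplectomorphisms. The natural candidates are squared Dehn twists $\tau_L^2$ in Lagrangian spheres $L\subset X$: by Seidel's theory, $\tau_L$ acts on $D^\pi\scrF(X)$ as the spherical twist along the spherical object $[L]$, so $\tau_L^2$ corresponds to the squared spherical twist, which is exactly the type of element generating $\pi_1(\scrP^+_0(Y))$ in the Bayer--Bridgeland description. Since $n=2$, each $\tau_L^2$ is smoothly isotopic to the identity (the small-resolution argument recalled in the excerpt for $T^*S^2$ applies: $L$ arises as the vanishing cycle of a nodal degeneration, base-change produces a $3$-fold node, which admits a small resolution), so $\tau_L^2\in\ker(q)$. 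On the categorical side, $\tau_L^2$ acts trivially on cohomology, hence lies in the categorical Torelli group $F_\infty$. The task is then to choose a family $\{L_i\}$ of Lagrangian spheres, realised as vanishing cycles of the mirror-quartic family, so that $\{[\tau_{L_i}^2]\}\subset F_\infty$ generate a subgroup that is not contained in any finitely generated subgroup — e.g. by arranging that the $[L_i]$ realise infinitely many of the $(-2)$-classes whose associated squared twists form (part of) a free generating set of $F_\infty$, using the lattice-theoretic input of \cite{Bayer-Bridgeland} and the fact (cited in the excerpt from \cite{SS:k3}) that every spherical object is a Lagrangian-sphere vanishing cycle.

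**Passing from the image to the kernel.** The subtle point is that knowing the images generate an infinitely-generated subgroup of $G\scrF(X)$ does not immediately give that $\ker(q)$ is infinitely generated, because the map $\ker(q)\to G\scrF(X)$ need be neither injective nor surjective, and a priori a finitely generated $\ker(q)$ could surject onto an infinitely generated group only if that were impossible — which it is. That is the clean way out: if $\ker(\pi_0\Symp(X)\to\pi_0\Diff(X))$ were finitely generated, then its image in $G\scrF(X)/\langle[2]\rangle$ would be finitely generated; but we will have exhibited that this image contains the infinitely generated group generated by $\{[\tau_{L_i}^2]\}$, and a subgroup of a finitely generated group need not be finitely generated — so this alone is not a contradiction. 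Hence one genuinely needs that the $[\tau_{L_i}^2]$ generate a subgroup requiring infinitely many generators \emph{and} that this subgroup is a \emph{retract} or otherwise ``detected'' so that finite generation upstairs is obstructed; concretely, since $F_\infty$ is free, any subgroup is free, and a surjection from a finitely generated group onto a non-finitely-generated free group is impossible, so it suffices to arrange that the $[\tau_{L_i}^2]$ generate all of $F_\infty$ (or a finite-index, hence still non-finitely-generated, subgroup), using that spherical twists in vanishing cycles exhaust the spherical objects and Bayer--Bridgeland's identification of $F_\infty$ with $\pi_1(\scrP^+_0(Y))$, which is generated by squared twists along $(-2)$-classes.

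**Main obstacle.** The hard part is the last step: proving that the images of squared Dehn twists in \emph{Lagrangian spheres of the specific mirror quartic} generate the whole categorical Torelli group $F_\infty$, rather than some mysterious finitely generated subgroup. This requires (i) controlling which $(-2)$-classes on the mirror $Y$ are realised by Lagrangian-sphere vanishing cycles in $X$ under the chosen irrational toric Kähler form — where irrationality of $\omega$ is exactly what forces $\rho(Y)=1$ and keeps the relevant lattice small enough to analyse — and (ii) a Fukaya-categorical identification of $\tau_L$ with the spherical twist that is compatible with the Bayer--Bridgeland picture, i.e. a genuine use of homological mirror symmetry for the mirror quartic (as established in \cite{SS:k3}). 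Everything else — smooth triviality of $\tau_L^2$, membership in the Torelli group, the group-theoretic deduction — is comparatively routine; the mirror-symmetric bookkeeping that pins down the subgroup generated by geometric twists is where the real content lies.
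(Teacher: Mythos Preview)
Your proposal is essentially correct and follows the strategy the paper indicates (the paper itself does not give a self-contained proof, deferring to \cite{SS:k3}, but the surrounding discussion makes the intended argument clear). The architecture is exactly as you describe: elements of $K=\ker(\pi_0\Symp(X)\to\pi_0\Diff(X))$ act trivially on cohomology, hence map into the categorical Torelli group, which by Bayer--Bridgeland (via mirror symmetry to a Picard rank one K3) is a countably generated free group $F_\infty$; squared Dehn twists $\tau_L^2$ in vanishing-cycle Lagrangian spheres are smoothly trivial and map to the squared spherical twists that generate $F_\infty$; since every spherical object is quasi-isomorphic to a vanishing-cycle Lagrangian sphere \cite{SS:k3}, the image $\rho(K)$ is all of $F_\infty$, and hence $K$ cannot be finitely generated.

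One small comment on exposition: your ``Passing from the image to the kernel'' paragraph is more convoluted than necessary. Once you have both containments $\rho(K)\subseteq F_\infty$ (Torelli) and $F_\infty\subseteq\rho(K)$ (the squared twists lie in $\rho(K)$ and generate $F_\infty$), you get $\rho(K)=F_\infty$ directly, and the image of a finitely generated group is finitely generated --- done. There is no need to invoke retracts or worry about proper subgroups; the two-sided containment is the whole point, and you do have both halves. Your identification of the ``main obstacle'' is accurate: the real content is the homological mirror symmetry theorem for the mirror quartic together with the classification of spherical objects as geometric vanishing cycles, both established in \cite{SS:k3}.
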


The map $\pi$ of \eqref{eqn:cover} is (modulo $GL(2,\bR)$) the universal cover of the orbifold $\frak{h} / \Gamma_0(2)^+$, where $\Gamma_0(2)^+ = \bZ/2 \ast \bZ/4$, and $\Auteq_{CY}(\scrF(X))/[2] = \bZ\ast\bZ/4$ where again the CY autoequivalencs are those acting trivially on $HH_2(\scrF(X))$, cf. \cite{Bayer-Bridgeland, SS:k3}. One infers the Dehn twist $\tau_L$ in a Lagrangian sphere $L\subset X$  admits no non-trivial root in $\pi_0\Symp(X)$ (being a smooth involution, it is its own $(2p+1)$-st root in $\pi_0\Diff(X)$, for any $p\geq 1$); $\pi_0\Symp(X)$ is not generated by torsion elements (which all map to the same factor in the abelianization of $\Auteq(\scrF(X))$), in contrast to $\Gamma_g$, etc.  

\begin{Remark}
It is interesting to compare the proof in \cite{Bayer-Bridgeland} with the cartoon description of stability conditions in terms of special Lagrangians given at the end of  Section \ref{Subsec:Definitions}.  For a Picard rank one K3 surface $X^{\circ}$, the central charge is given by $Z(E) = \langle \Omega,ch(E)\rangle$ for a vector $\Omega \in \bN(X^{\circ})\otimes \bC \cong \bC^3$ whose real and imaginary parts span a positive definite two-plane. There is a unique negative definite vector $\theta \in \bN(X^{\circ})\otimes \bR$ orthogonal to $\{Re(\Omega),Im(\Omega)\}$. Fix $E=\mathcal{O}_x$ the skyscraper sheaf of a point,  and a stability condition $\sigma$ for which the largest and smallest semistable factors $A_{\pm}$ of $E$ have phases $\phi_+ > \phi_-$ respectively. Then \cite{Bayer-Bridgeland} studies the flow on $\Stab(X^{\circ})$ defined by
\[
d\Omega/dt = \xi.\theta \qquad \xi = i\exp(i\pi/2 (\phi_+ + \phi_-))
\]
which locally pushes $A_{\pm}$ towards one another, decreasing $\phi_+ - \phi_-$. They prove such flows can be patched together and eventually contract $\Stab(X^{\circ})$ to the geometric chamber where $E$ is semistable.  Translating back to the $A$-side, the cartoon is now that instead of mean curvature flow, one fixes the Lagrangian torus, and flows in the space of holomorphic volume forms to try to make it special. \end{Remark}


\subsection{Quiver threefolds}

Let $(Q,W)$ be a quiver with potential.  This determines  a 3-dimensional Calabi-Yau category $\scrC(Q,W)$ \cite{Ginzburg:cy}. If $(Q,W)$ has no loops, there is a ``mutation" operation which yields another $(Q',W')$ and a (pair of) derived equivalence(s) $\scrC(Q,W) \simeq \scrC(Q',W')$.  In a number of interesting cases, these categories are related to Fukaya categories of threefolds:
\begin{enumerate}
\item  The zero-potential on the two-cycle quiver (arrows labelled $e,f$)  is realised within the compact Fukaya category of the affine quartic $\{x^2+y^2+(zt)^2=1\}\subset \bC^4$, which is  a plumbing of two 3-spheres along a circle (plumbed so the Lagrange surgery is an $S^1\times S^2$), see \cite{EWS}. 
\item The potential $(ef)^2$ on the same quiver is realised by the compact Fukaya category of the  complement of a smooth hyperplane section in the variety of complete flags in $\bC^3$, which is again a plumbing of two 3-spheres along a circle (plumbed so the Lagrange surgery is an $S^3$);  potentials $(ef)^p$ on the two-cycle quiver, for prime $p>2$, arise in characteristic $p$ from the corresponding plumbings where the surgery is a  Lens space $L(p,1)$, see \cite{EWS}.
\item The potential associated \cite{Labardini-Fragoso} to an ideal triangulation of a marked bordered surface $(S,M)$ is realised by the Fukaya category of a threefold which is a conic fibration over $S$ with special fibres at $M$ \cite{Smith:quiver} (this is a cousin of the space from \eqref{eqn:local_holomorphic}, where $M=\emptyset$).
\end{enumerate}
The category $\scrC(Q,W)$ has a distinguished heart, equivalent to the category of nilpotent representations of the Jacobi algebra $\Jac(Q,W)$, with  $d$ simple objects up to isomorphism if $Q$ has $d$ vertices. This implies that a large subset  $\scrU \subset \Stab(\scrC(Q,W))$ with non-empty open interior is  a union of cells $\hbar^d$ (where $\hbar$ is the union of the upper half-plane and the negative real axis excluding zero), indexed by $t$-structures having hearts with finite length, glued together along their boundaries by the combinatorics of tilting (quiver mutation).  This provides one of the most direct routes to (partial) computations of spaces of stability conditions. Often, the image of $\scrU$ under the natural circle action on $\Stab(\scrC)/\langle [2]\rangle$ covers a path-component.

Let $\phi$ be a meromorphic quadratic differential on a surface $S$ with at least one pole of order $\geq 2$, with $p$ double poles, and distinct zeroes. Let $M\subset S$ be the set of poles. There is a threefold $Y_{\phi} \to S$, a variant of that from \eqref{eqn:local_holomorphic}, now with empty fibres over poles of order $>2$ and reducible fibres (singular at infinity) over double poles; a choice of component of each reducible fibre defines a class $\eta \in H^2(Y_{\phi};\bZ/2)$. Let $\scrF(Y_{\phi};\eta)$ denote the subcategory of the $\eta$-sign-twisted Fukaya category split-generated by Lagrangian spheres. Then (see \cite{BS, Smith:quiver}):

\begin{Theorem}[Bridgeland, Smith]  \label{Thm:BS} There is an equivalence $\scrF(Y_{\phi};\eta) \simeq \scrC(Q,W)$ for $(Q,W)$ the quiver with potential associated to any ideal triangulation of $(S,M)$.  Moreover, 
\begin{equation} \label{eqn:auteq_quiver_threefold}
1 \to \mathrm{Sph}(S,M) \to \Auteq(\scrC(Q,W)) \to \Gamma^{\pm}(S,M) \to 1
\end{equation}
where $\Gamma^{\pm}$ is an extension of the mapping class group $\Gamma(S,M)$ by $(\bZ/2)^p$.  
\end{Theorem}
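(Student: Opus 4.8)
The plan is to establish Theorem~\ref{Thm:BS} in three logically separate packages: (i) the derived equivalence $\scrF(Y_\phi;\eta) \simeq \scrC(Q,W)$, (ii) the identification of spherical twist subgroups and the exact sequence \eqref{eqn:auteq_quiver_threefold}, and (iii) the computation of $\Gamma^\pm(S,M)$ as a $(\bZ/2)^p$-extension of the surface mapping class group. The reference points are the quiver-threefold construction of \cite{Smith:quiver} and the stability-condition analysis of \cite{BS}, so much of the work is assembling their outputs; the genuinely new content is the surjectivity/kernel bookkeeping in \eqref{eqn:auteq_quiver_threefold}.

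**First** I would construct the conic fibration $Y_\phi \to S$ explicitly from the quadratic differential: over a point where $\phi\neq 0$ the fibre is the smooth affine conic $\{q_1^2+q_2^2+q_3^2 = \phi(s)\}$, over a simple zero of $\phi$ it acquires a node, over an order $>2$ pole the fibre is removed (so the total space is only ``convex at infinity'' along those directions), and over a double pole the fibre degenerates to a reducible conic singular at infinity; the two components of each such reducible fibre give the class $\eta\in H^2(Y_\phi;\bZ/2)$. The matching curves of the fibration over the edges of an ideal triangulation of $(S,M)$ produce Lagrangian $3$-spheres, and one checks — as in \cite{Smith:quiver} — that their Floer-theoretic intersection pattern (including the sign twist by $\eta$) reproduces the quiver $Q$ together with the relations encoded by $W$. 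The split-generation statement, that these spheres generate $\scrF(Y_\phi;\eta)$, is inherited from the analogous statement for the Milnor fibres appearing as local models; this gives (i).

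**Next**, for the exact sequence \eqref{eqn:auteq_quiver_threefold}: the subgroup $\mathrm{Sph}(S,M)$ is by definition generated by the spherical twists in the vanishing-cycle Lagrangian spheres, and one must show (a) that these twists generate the subgroup of autoequivalences acting trivially on the ``combinatorial'' data — concretely on the set of finite-length hearts / the cell decomposition $\scrU$ of $\Stab(\scrC(Q,W))$ by $\hbar^d$-cells described just before the theorem — and (b) that the quotient acts faithfully as $\Gamma^\pm(S,M)$. Step (a) is the Bridgeland--Smith argument \cite{BS}: an autoequivalence preserving the distinguished heart up to the $\bZ$-grading is, after composing with twists, the identity, because spherical twists act transitively on the relevant tilting graph (quiver mutations) and the stabiliser of a cell is generated by the spherical objects supported on it. For (b), one transports an autoequivalence to a self-map of $\scrM(S,\phi)$ (the moduli of quadratic differentials, $= \Stab/\Auteq$ modulo $GL(2,\bR)$) and then to an isotopy class of diffeomorphism of $(S,M)$ — the $(\bZ/2)^p$ ambiguity is exactly the choice of component of each of the $p$ reducible fibres, i.e. the freedom in lifting a surface diffeomorphism to one preserving $\eta$, which is why $\Gamma^\pm$ is an extension of $\Gamma(S,M)$ by $(\bZ/2)^p$ rather than $\Gamma(S,M)$ itself. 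I would verify this extension is the one pulled back from $H^1$ of the double-pole locus, and that it is typically non-split.

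**The main obstacle** I expect is exactness in the middle of \eqref{eqn:auteq_quiver_threefold} — equivalently, showing that the only autoequivalences acting trivially on $\Gamma^\pm(S,M)$ are products of spherical twists, with no further ``phantom'' or grading-shift contributions. This is where one needs the full strength of the classification of spherical objects in $\scrC(Q,W)$ (every spherical object lies in the $\mathrm{Sph}$-orbit of a vanishing cycle) together with the faithfulness of the $\Stab$-action from Lemma~\ref{Lem:features}: an autoequivalence trivial on $\Gamma^\pm$ fixes the cell structure of $\scrU$ pointwise up to the twist subgroup, hence fixes a stability condition, hence (being holomorphic) is the identity after dividing by the twists. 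Making the passage from ``acts trivially on the tilting graph'' to ``is a product of twists'' uniform over all ideal triangulations — and controlling the shift $[2]$ consistently — is the delicate point; everything else is a matter of carefully combining \cite{Smith:quiver} and \cite{BS}.
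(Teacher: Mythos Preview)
The paper does not supply a proof of this theorem at all: it is stated with the attribution ``(see \cite{BS, Smith:quiver})'' and then simply discussed. So there is no in-paper argument to compare your proposal against; your outline is, in effect, a sketch of what happens in those two references rather than of anything done here.

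As such a sketch it is broadly faithful to the structure of \cite{BS, Smith:quiver}: the equivalence with $\scrC(Q,W)$ is the main result of \cite{Smith:quiver}, and the exact sequence \eqref{eqn:auteq_quiver_threefold} is extracted in \cite{BS} from the identification of $\Stab(\scrC(Q,W))$ with a space of quadratic differentials, exactly along the lines you describe (autoequivalences permute the cells of the exchange-graph decomposition, the quotient action is by the mapping class group of $(S,M)$ up to the $(\bZ/2)^p$ coming from the double poles, and the kernel is generated by spherical twists). One small correction: your ``split-generation statement, that these spheres generate $\scrF(Y_\phi;\eta)$'' is not something to be proved, since in the paper $\scrF(Y_\phi;\eta)$ is \emph{defined} as the subcategory of the sign-twisted Fukaya category split-generated by Lagrangian spheres. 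And your remark that the exact-sequence bookkeeping is ``genuinely new content'' overstates things: that analysis is already carried out in \cite{BS}, so there is nothing left to add beyond citing it.
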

The first factor in \eqref{eqn:auteq_quiver_threefold} acts through spherical twists and admits a natural representation to $\pi_0\Symp_{ct}(Y_{\phi})$ whilst the quotient factor acts via non-compactly-supported elements of $\pi_0\Symp(Y_{\phi})$.   In particular, the natural map $\mathrm{Sph}(S,M) \to \pi_0\Symp_{ct}(Y_{\phi})$ is actually split. Stable objects in $Y_{\phi}$ are all given by special Lagrangian 3-spheres or $S^1\times S^2$'s, corresponding to open and closed saddle connections in the flat metric on $(S,\phi)$. 

There are  3-folds $Y_{\phi,\psi}$ associated to a pair $(\phi,\psi) \in H^0(K_S^{\oplus 2}) \oplus H^0(K_S^{\oplus 3})$, fibred over $S$ by $A_2$-Milnor fibres rather than $A_1$-Milnor fibres: given line bundles $L_1, L_2$ over $S$ with $K_S = L_1L_2$,
\[
Y_{\phi,\psi} = \{(x,y,z) \in L_1^3 \oplus L_2^3 \oplus L_1L_2 \, | \, xy = z^3 + \phi\cdot z + \psi\}.
\] 
In this case the conjectural embedding of a moduli space of pairs $(\phi,\psi)$ into $\Stab(\scrF(Y_{\phi,\psi}))$ cannot be onto an open set for dimension reasons (cf. Section \ref{Subsec:Definitions}). 
The DT-counting invariants for semistable objects in $K$-theory class $d\gamma$ can have exponential growth in $d$ \cite{Galakhov_et_al} and have interesting algebraic generating functions \cite{Mainiero}. The 3-fold $Y_{\phi,\psi}$  now contains a  special Lagrangian submanifold $L\cong (S^1\times S^2)\#(S^1\times S^2)$, obtained from surgery of two 3-spheres lying over tripods which meet at three end-points, and the wild representation theory of $\pi_1(L)$ may be responsible for the exponential growth of stable objects (flat bundles over $L$) on the symplectic side. It would be interesting to know if \eqref{Hyp:finite_measure}  is related to polynomial growth of DT-invariants.


\subsection{Cubic four-folds}
The derived category of a cubic four-fold $Y\subset \bP^5$ admits a semi-orthogonal decomposition, the interesting piece of which is a CY2-category $\scrA_Y$ introduced by Kuznetsov \cite{Kuznetsov}.  These categories are of symplectic nature, cf. \cite[Proposition 2.17]{Huybrechts:K3category} and \cite{SS}.  Let $E$ be the (Fermat) elliptic curve with a non-trivial $\bZ/3$-action, generated by $\xi$, and $X$ be the K3 surface which is the crepant resolution of $(E\times E)/\langle(\xi,\xi^{-1})\rangle$.  (This is sometimes called the ``most algebraic" K3 surface; it has Picard rank 20 and, amongst such K3's, has smallest possible discriminant.)  Then for certain toric K\"ahler forms $\omega$ on $X$ which are ``irrational" (again meaning the areas of the resolution curves and a hyperplane section are linearly independent over $\bQ$), there is an equivalence \cite{Sheridan2016, SS} (strictly, this requires incorporating certain immersed Lagrangian tori into $\scrF(X,\omega)$)
\[
D^{\pi}\scrF(X,\omega) \simeq \scrA_{Y_{d(\omega)}} \, \subset \, D^b(Y_{d(\omega)})
\]
where the valuations of the coefficients in the equation defining the cubic $Y$ over $\Lambda$ are determined by the choice of K\"ahler form.   For sufficiently general $Y$, the space of stability conditions has been computed by Bayer \emph{et al} \cite{Bayer_et_al}, following a direct computation of autoequivalences due to Huybrechts \cite{Huybrechts:K3category}, and one finds (cf. \cite{SS, SS:in_progress}):

\begin{Theorem}[Sheridan, Smith] \label{Thm:most_algebraic}
For the most algebraic K3 surface $X$ with an irrational toric K\"ahler form, the map
$\rho: \pi_0\Symp(X,\omega) \longrightarrow \Auteq(\scrF(X,\omega))/[2]$ 
has image $\bZ/3$. If $Z(X,\omega) = \ker(\rho)$ then $\pi_0\Symp(X,\omega) = Z(X,\omega) \rtimes \bZ/3$.
\end{Theorem}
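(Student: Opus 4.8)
The plan is to leverage the homological mirror equivalence $D^\pi\scrF(X,\omega) \simeq \scrA_{Y}$ together with the explicit description of $\Auteq(\scrA_Y)/[2]$ from Huybrechts and the computation of $\Stab(\scrA_Y)$ from Bayer \emph{et al}, and to feed these into the framework of the first Proposition on $K3$ surfaces (finite order of symplectomorphisms preserving all Lagrangian sphere classes) to pin down the image of $\rho$, and then separately to construct a section of $\rho$ realising the $\bZ/3$. First I would recall that for the most algebraic $X$, the extended Néron--Severi lattice $\bN(X)$ has rank $22$ and the relevant autoequivalence group of $\scrA_Y$ (acting trivially on $HH_2$, i.e. the Calabi--Yau autoequivalences) is, after Huybrechts and \cite{Bayer_et_al}, generated by squared spherical twists together with a distinguished order-three symmetry coming from the $\bZ/3$-action in the construction of $X$ as a resolution of $(E\times E)/\langle(\xi,\xi^{-1})\rangle$; the key numerical input is that the only finite-order Hodge isometries of $\bN(X)$ preserving the period, up to the Weyl group generated by $(-2)$-reflections, form exactly $\bZ/3$ (this is where the ``smallest discriminant'' property is used, exactly as in \cite[Ch.3, Cor.3.4]{Huybrechts:K3book} applied in the first Proposition).

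The main steps, in order: (1) Identify $\Auteq(\scrF(X,\omega))/[2]$ with a semidirect product of the subgroup generated by (classes of) spherical twists and the $\bZ/3$, using \cite{Bayer_et_al} and the irrationality hypothesis on $\omega$, which forces the mirror $Y$ to be sufficiently generic that its derived autoequivalences are fully understood and the Picard/transcendental splitting on the $B$-side matches the algebraic/symplectic one on the $A$-side. (2) Show $\rho$ is \emph{not} surjective onto the spherical-twist part: a single (unsquared) Dehn twist $\tau_L$ is a symplectomorphism, but the first Proposition of the paper shows that a symplectomorphism preserving the Lagrangian isotopy class of every Lagrangian sphere acts with finite order on $D^\pi\scrF(X)$, and any element of $\pi_0\Symp(X,\omega)$ acts on $H^*(X;\bC)$ through a Hodge isometry of the transcendental lattice whose order divides $3$ after the $(-2)$-reflections are quotiented out; combined with the fact that spherical twists act by infinite-order (unipotent-on-cohomology) transformations, one deduces the image of $\rho$ lands in a finite subgroup, and then that it is exactly the order-three piece --- the $\bZ/3$ is hit because the ambient $\bZ/3$-symmetry of $(E\times E)/\langle(\xi,\xi^{-1})\rangle$ descends to an honest (toric, $\omega$-preserving) symplectomorphism of $X$ whose induced autoequivalence is the generator of the $\bZ/3$. (3) Construct the splitting $\pi_0\Symp(X,\omega) = Z(X,\omega)\rtimes\bZ/3$: the order-three symplectomorphism built in step (2) gives a set-theoretic, in fact group-theoretic, section of $\rho$ onto its image, since $\bZ/3$ is generated by that single torsion element and it visibly has order $3$ in $\pi_0\Symp$ (its cube is the identity geometrically, not merely Floer-theoretically); hence $\ker(\rho) = Z(X,\omega)$ is complemented.

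The hard part will be step (2), specifically controlling the image of $\rho$ on the full symplectic mapping class group rather than just on the subgroup of ``Lagrangian-sphere-class-preserving'' symplectomorphisms: one needs to know that an \emph{arbitrary} symplectomorphism acts on the set of spherical objects of $D^\pi\scrF(X)$ in a way that is pinned down by its (transcendental) cohomological action, which requires the upgraded version of the classification of spherical objects alluded to after the first Proposition (all spherical objects are Lagrangian sphere vanishing cycles, generalised away from Picard rank one), plus the input from \cite{Bayer_et_al} that the only autoequivalences of $\scrA_Y$ commuting with everything forced by $\rho$ are the expected ones. The delicate point is that the lattice-theoretic rigidity (few finite-order Hodge isometries) is on the $\bN(X)\cong$ algebraic side, while the constraint that needs to vanish lives on the transcendental side $T(X)$; bridging these requires the statement, implicit in \cite{SS}, that a symplectomorphism acting trivially on $\scrF(X,\omega)$ must act trivially on all of $H^*(X;\bC)$, i.e. the categorical action detects the full Hodge structure. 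Once that bridge is in place, the rest is bookkeeping with the known presentation of $\Auteq(\scrA_Y)/[2]$.
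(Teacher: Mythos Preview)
Your proposal inverts the logical structure of the argument and rests on a false premise about $\Auteq(\scrF(X,\omega))/[2]$.

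The core of the actual proof is that, for the irrational toric K\"ahler form, the mirror cubic $Y$ is \emph{very general}, and Huybrechts' computation then gives $\Auteq(\scrA_Y)/[2]\cong\bZ/3$ outright, generated by the degree-shift (rotation) functor coming from the semi-orthogonal decomposition. There is no large ``spherical-twist part'' to exclude: the whole target of $\rho$ is already $\bZ/3$. Surjectivity is then witnessed by the residual diagonal $\bZ/3$ on $E\times E$, which lifts to a genuine symplectomorphism of $(X,\omega)$; the semidirect product splitting follows immediately since that element has order three in $\pi_0\Symp(X,\omega)$. The paper records exactly this: mirror equivalence to $\scrA_Y$, Huybrechts' autoequivalence computation, Bayer \emph{et al.}'s stability computation, and the sentence ``the image of $\rho$ is generated by the obvious residual diagonal action on $E\times E$''.

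Your step (2) is where things break. You write that a Dehn twist $\tau_L$ is a symplectomorphism, and then try to argue that the image of $\rho$ misses the spherical-twist subgroup. These two statements are in direct tension: if $(X,\omega)$ contained a Lagrangian sphere $L$, then $\rho(\tau_L)$ \emph{would} be the spherical twist $T_L$, and since $T_L$ acts on $K$-theory as a reflection (order two, nontrivial) it cannot lie in $\bZ/3$, so the theorem would be false. The resolution is not a clever argument bounding the image, but the geometric fact that $(X,\omega)$ has \emph{no} Lagrangian spheres at all. Concretely: the irrationality hypothesis on the toric K\"ahler form forces $[\omega]^{\perp}\cap\Pic(X)=0$, so any integral class orthogonal to $[\omega]$ lies in the transcendental lattice $T(X)$; but $T(X)$ has signature $(2,0)$ and hence contains no $(-2)$-classes. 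Equivalently, on the mirror side the algebraic Mukai lattice of $\scrA_Y$ for very general $Y$ admits no spherical classes, so $\scrA_Y$ has no spherical objects and no spherical twists. Your appeal to the earlier Proposition on symplectomorphisms preserving Lagrangian-sphere isotopy classes is therefore vacuous here, and the lattice-theoretic ``bridge'' you describe in the final paragraph is solving a problem that does not arise.
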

The image of $\rho$ is  generated by the obvious residual diagonal action on $E\times E$, which lifts to $X$. Theorem \ref{Thm:most_algebraic} reduces the task of understanding the symplectic mapping class group of $(X,\omega)$ to that of understanding what Floer theory doesn't see, which is about as far as we could hope to come (one can draw similar conclusions in the setting of Theorem \ref{Thm:SS}, see \cite{SS:k3}).  More concretely, for such irrational K\"ahler forms one learns that $HF(\phi)$ can take only one of three possibilties for any symplectomorphism $\phi$, and any $\phi$ has trivial Floer-theoretic entropy, etc.  From the K\"unneth theorem, this has consequences for fixed points of symplectomorphisms of $X\times T^2$, say, which -- in the same vein as the Arnol'd conjecture -- go beyond information extractable from smooth topology (stabilising by taking product with $T^2$ serves to kill information from the Lefschetz theorem).

It is interesting to imagine turning around the direction of mirror symmetry in this case.  Naively, one would predict some relation between the categories
\begin{equation} \label{eqn:cubic_speculation}
D^{\pi}\scrF(Y;0) \qquad \textrm{and} \qquad D^b(X) = D(E\times E)^{\bZ/3}
\end{equation}
where $\scrF(Y;0)$ is the nilpotent summand of the Fukaya category (corresponding in \eqref{eqn:splits} to the zero-eigenvalue).  It seems far from obvious that these should  be equivalent, and the actual relation, if any, might be more subtle. Nonetheless, although $\scrF(Y)$ is not $\bZ$-graded as $Y$ is Fano, the summand $\scrF(Y;0)$ \emph{is} expected to admit a $\bZ$-grading, whence one could talk about stability conditions.  Let $\scrM_{3,d}$ denote the moduli space of cubic $d$-folds.  These are well-studied spaces (and are famously CAT(0) for $d=2,3$ \cite{Allcock-Carlson-Toledo:cubic2, Allcock-Carlson-Toledo:cubic3}).  Starting from the lattice-theoretic co-incidence $H^2(X;\bZ) \supset \Pic(X)^{\perp} = -A_2 = \langle h^2\rangle^{\perp} \subset H^4(Y;\bZ)$, where $h^2\in H^4(Y;\bZ)$ is the class defined by a hyperplane, results of Laza \cite{Laza} yield an embedding (also observed by R.~Potter)
\[
\scrM_{3,4} \longrightarrow \Auteq_{CY}(X) \backslash \Stab_{\dagger}(D^b(X))/\widetilde{GL}^+(2,\bR)
\]
onto the complement of an explicit divisor $\Delta$ (associated to the locally finite hyperplane arrangement defined by classes of square  $-6$, or equivalently the complement in the period domain of the divisors associated to classes of square $-2$ and $-6$). Bridgeland's conjecture and some concrete relation in \eqref{eqn:cubic_speculation} might then give insight into the monodromy homomorphism
\[
\pi_1(\scrM_{3,4}) \longrightarrow \pi_0\Symp(Y) \longrightarrow \Auteq(D^{\pi}\scrF(Y;0)).
\]
In this way, one could hope to use stability conditions on $K3$ surfaces to attack classical problems related to the symplectic monodromy of hypersurfaces.

\subsection{Clusters}

Semantic sensitivities notwithstanding, we end with a digression. It may be useful to point out where much of the activity in the subject is concentrated.
If $(Q,W)$ satisfies suitable non-degeneracy assumptions, one can associate to $(Q,W)$ two spaces: $\Stab(\scrC(Q,W))$ and the cluster variety $\scrX(Q,W)$.  The first is glued together out of chambers $\hbar^d$ indexed by the vertices of the tilting tree, and the second is glued from birational maps of algebraic tori $(\bC^*)^d$ indexed by the same data.  It is believed that there is a (complicated, transcendental) complex Lagrangian submanifold $B\subset \Stab(\scrC)$ and an algebraic integrable system (with compact complex torus fibres) $\scrT \to B$ for which $\scrT$ and $\scrX$ are diffeomorphic, naturally equipped with different complex structures belonging to a single hyperk\"ahler family \cite{GMN, Neitzke:hk}.  The explicit diffeomorphism should be obtained from a Riemann-Hilbert problem, whose definition and solution involves the moduli stacks of stable objects and their Donaldson-Thomas theory \cite{Bridgeland:riemann_hilbert}.  

A genus $d$ Lagrangian surface $\Sigma_d \subset X^4$ in a symplectic four-manifold defines a chart $(\bC^*)^d$ in a tentative mirror to $X$, and one can use the complexity of cluster atlases to prove existence theorems for infinite families of Lagrangian surfaces \cite{STZ}. This is a striking connection back to symplectic topology, but one that it seems hard to formulate on the space of stability conditions directly.

\bibliographystyle{plain}
\bibliography{mybib}

\end{document}